\begin{document}

\title{On The Finite Generation of Relative Cohomology for Lie Superalgebras}
\author{Andrew Maurer}
\address{Department of Mathematics\\University of Georgia\\Athens, GA 30602}
\email{andrew.b.maurer@gmail.com}
\date{\today}
\subjclass[2010]{17B10}
\keywords{Lie superalgebra, finite generation, cohomology, support variety, spectral sequence.}

\begin{abstract}

  The author establishes finite-generation of the cohomology ring of a classical Lie superalgebra relative to an even subsuperalgebra. A spectral sequence is constructed to provide conditions for when this relative cohomology ring is Cohen-Macaulay. With finite generation established, support varieties for modules are defined via the relative cohomology, which generalize those of \cite{BKN-1}.
\end{abstract}

\maketitle

\section{Introduction}
\label{sec:intro}

\subsection{}Establishing finite generation of cohomology rings is a powerful result in representation theory which links cohomology theory with commutative algebra and algebraic geometry. For example, Evens \cite{Evens-cohomology-ring} and Venkov \cite{MR0108788} each independently proved that the cohomology ring of a finite group is finitely generated. This result was used by Quillen \cite{MR0298694}, Carlson \cite{MR723070}, Chouinard \cite{MR0401943}, and Alperin-Evens \cite{MR621284} to study the cohomology variety of the finite group. This allowed those listed, among others, to use techniques from classical algebraic geometry in the study of representation theory of finite groups. Similar work has been carried out in other contexts; by Friedlander-Parshall \cite{FP-unipotent,MR860682} for restricted Lie algebras, and by Friedlander-Suslin \cite{MR1427618} for finite-dimensional cocommutative Hopf algebras.

Relative cohomology, as defined by Hochschild \cite{MR0080654} is less understood than ordinary cohomology. For instance, the cohomology ring of a finite group relative to a subgroup need not be finitely generated. Indeed, Brown \cite{MR1310744} provided an example of a finite group whose relative cohomology is infinitely generated. Surprisingly, in the case $\g = \even{\g} \oplus \odd{\g}$ is a (finite-dimensional) classical Lie superalgebra, the cohomology ring of $\g$ relative to $\even{\g}$ is always finitely generated. Specifically, Boe-Kujawa-Nakano \cite{BKN-1} realized this relative cohomology ring as the invariants of a polynomial ring under the action of a reductive group. In fact, in the case of Lie superalgebras, ordinary cohomology is often times finite-dimensional as a vector space, as proved by Fuks-Leites \cite{fuks1984cohomology}
. This implies relative cohomology rings carry more representation theoretic information than their ordinary counterparts. Furthermore, Boe-Kujawa-Nakano \cite{BKN-1} demonstrated the atypicality of a supermodule -- a combinatorial invariant defined by Kac-Wakimoto \cite{MR1327543} -- is realized as the dimension of the support variety of that module. The geometrization of combinatorial ideas makes support variety theory useful and powerful.

One of the main results of this paper asserts that for a classical Lie superalgebra, cohomology rings relative to even subalgebras are finitely-generated over $\CC$, and the relative cohomology of a finite-dimensional module is a Noetherian module for this ring. In proving the main theorem, a spectral sequence is constructed  which relates relative Lie algebra cohomology to odd degree elements of the Lie superalgebra in an interesting way. The main theorem paves the way to define and investigate support varieties for supermodules relative to a broader class of subalgebras. The importance of this result is apparent in that cohomology relative to an even subalgebra provides a middle ground between the case of absolute cohomology of Fuks-Leites and cohomology relative to $\even{\g}$ of \cite{BKN-1}.

\subsection{Overview of paper}
\label{sec:overview}

Let $\g = \even{\g} \oplus \odd{\g}$ be a Lie superalgebra, $\a \leq \even{\g}$ a subalgebra, and $M$ a finite-dimensional $\g$-module. The theory of relative cohomology \cite{MR0080654} can be used to define relative cohomology groups $\H^n(\g,\a;M)$, which may be viewed as relative derived functors of $\Hom_{(\g,\a)}(\CC,-)$. In \cite[Theorem 2.5.2]{BKN-1} it was shown that when $\g$ is a classical Lie superalgebra and $\a = \even{\g}$, the cohomology ring $\H^\bullet(\g,\even{\g};\CC)$ is the subring $S(\odd{\g}^*)^{\even{G}}$ of invariants under a reductive group action, and is thus finitely generated over $\CC$. This paper's main result extends this work to arbitrary subalgebras $\a \leq \even{\g}$.

\begin{maintheorem}
  Let $\g = \even{\g} \oplus \odd{\g}$ be a classical Lie superalgebra, and $\a \leq \even{\g}$ an (even) subalgebra, and $M$ a $\g$-module.
  \begin{enumerate}[\indent\rm (a)]
    \item There is a spectral sequence $\{E_r^{p,q}\}$ which computes cohomology and satisfies
  \[
    E_2^{p,q}(M) \cong \H^p(\g,\even{\g};M) \otimes \H^q(\even{\g},\a;\CC) \Rightarrow \H^{p+q}(\g,\a;M)
  \]
  For $1 \leq r \leq \infty$, $E_r^{\bullet,\bullet}(M)$ is a module for $E_2^{\bullet,\bullet}(\CC)$. When $M$ is finite-dimensional, $E_2^{\bullet,\bullet}(M)$ is a Noetherian $E_2^{\bullet,\bullet}(\CC)$-module.
    \item Moreover, the cohomology ring $\H^\bullet(\g,\a;\CC)$ is a finitely-generated $\CC$-algebra.
  \end{enumerate}
\end{maintheorem}

The paper is outlined as follows. In Section \ref{sec:prelims}, Lie superalgebras, modules for Lie superalgebras, and cohomology of Lie superalgebras are defined. The pace is brisk and the interested reader will find a more thorough overview in \cite{BKN-1,Kac}. In Section \ref{sec:spectral-sequence} the author establishes finite generation of the relative cohomology ring. To do so, a first-quadrant spectral sequence as described above is constructed, similar to that of Hochschild and Serre \cite{HS-53}, pages are identified, and a standard argument is used. Additionally, the edge homomorphism of the $E_2^{\bullet,0} \to E_\infty^{\bullet,0}$ is identified as restriction, making $\H^\bullet(\g,\a;\CC)$ an integral extension of a homomorphic image of $\H^\bullet(\g,\even{\g};\CC)$. Equipped with the spectral sequence of the previous section, we devote Section \ref{sec:structure} to investigating the structure of these relative cohomology rings. For the relative cohomology ring to be Cohen-Macaulay, it is shown to be sufficient that the spectral sequence of Section \ref{sec:spectral-sequence} collapse at the $E_2$ page. This is used to compute a broad class of examples. Finally, we are in a position to systematically study support varieties for Lie superalgebras, which we do in Section \ref{sec:support-varieties}. In this section, support varieties are defined and several basic properties are stated before addressing the more difficult questions of realizability and connectedness. Our realizability theorem demonstrates a naturality between support varieties for $(\g,\even{\g})$ and those for $(\g,\a)$.

The author would like to thank his advisor, Dr Daniel Nakano for much motivation and support, and Dr William Graham for helpful conversations involving spectral sequences and Lie algebra cohomology. This work was completed as a part of the author's dissertation at University of Georgia, and was partially supported by the Research and Training Group in Algebraic Geometry, Algebra and Number Theory grant DMS-1344994 funded by the National Science Foundation.

\section{Preliminaries}
\label{sec:prelims}

This section includes basic facts about Lie superalgebras and their representation theory.

\subsection{Lie superalgebras}
\label{sec:superalgebras}

A \textit{superspace} is a $\ZZ_2$-graded vector space $V = \even{V} \oplus \odd{V}$. Elements of $\even{V}$ are called \textit{even} while elements of $\odd{V}$ are called \textit{odd}. Elements of $\even{V} \cup \odd{V}$ are called \textit{homogeneous}. For homogeneous $x \in V_i$, the \textit{degree} is $\bar x = i \in \ZZ_2$. If $V = \even{V} \oplus \odd{V}$ and $W = \even{W} \oplus \odd{W}$ are superspace then so is $\Hom_\CC(V,W)$ via
\begin{equation}\label{eq:hom-grading}
  \Hom_\CC(V,W) = \underbrace{\Hom_\CC(\even{V},\even{W}) \oplus \Hom_\CC(\odd{V}, \odd{W})}_{\even{\Hom_\CC(V,W)}} \oplus \underbrace{\Hom_\CC(\even{V},\odd{W}) \oplus \Hom_\CC(\odd{V},\even{W})}_{\odd{\Hom_\CC(V,W)}}
\end{equation}

A \textit{Lie superalgebra} is a superspace $\g  = \even{\g} \oplus \odd{\g}$ equipped with a  bilinear bracket operation $[\cdot,\cdot]:\g\otimes\g \to \g$ which respects the grading, i.e. $[\g_i,\g_j] \subseteq \g_{i+j}$, satisfying the following analogues of anticommutativity and the Jacobi identity:
\begin{enumerate}
\item $[x,y] + (-1)^{\bar x \cdot \bar y} [y,x] = 0$,
\item $[x,[y,z]] = [[x,y],z] + (-1)^{\bar x \bar y} [y,[x,z]]$.
\end{enumerate}
Note that $\even{\g}$ is a Lie algebra and $\odd{\g}$ is a $\even{\g}$-module. When $\g$ and $\h$ are Lie superalgebras, a \textit{homomorphism} $\varphi:\g\to\h$ is an even linear map satisfying $\varphi([x,y]) = [\varphi(x),\varphi(y)]$. A \textit{Lie subsuperalgebra} of $\g$ is a Lie superalgebra equipped with an injective homomorphism of Lie superalgebras $i : \a \hookrightarrow \g$. Namely, a subalgebra $\a \leq \g$ with $\a = \a \cap \even{\g} \oplus \a \cap \odd{\g}$ is a subsuperalgebra.

A \textit{classical Lie superalgebra} is a Lie superalgebra $\g = \even{\g} \oplus \odd{\g}$ such that there exists a reductive algebraic group $\even{G}$ with $\even{\g} = \Lie(\even{G})$ with an action of $\even{G}$ on $\odd{\g}$ which differentiates to the adjoint action of $\even{\g}$. Basic examples of classical Lie superalgebras may be found in \cite{BKN-1}.

\subsection{Modules for Lie superalgebras}
\label{sec:modules}

A \textit{representation} of $\g$ is a homomorphism $\varphi: \g \to \gl(V)$, which defines a \textit{$\g$-module} structure on $V$ via $g.m = \varphi(g)(m)$. There exists an associative algebra $U(\g)$, called the \emph{universal enveloping superalgebra}, such that $U(\g)$-modules correspond to $\g$-modules as just defined.

The category of $\g$-modules is not an Abelian category. To remedy this, we restrict our attention to the Abelian subcategory $\even{\Mod(\g)}$, whose objects are the same and whose morphisms are the even homomorphisms of modules for $\g$. We complement our study of $\even{\Mod(\g)}$ using the \textit{parity change functor} $\Pi: \Mod(\g) \to \Mod(\g)$ which changes the grading on a superspace; $\even{\Pi(M)} = \odd{M}$ and $\odd{\Pi(M)} = \even{M}$. Observe that $\Pi(M)$ is naturally a $\g$-module since $\End(\Pi(M)) \cong \End(M)$ as graded vector spaces. The utility of this functor comes from the fact that $\odd{\Hom_\CC(V,W)} = \even{\Hom_\CC(V,\Pi(W))}$.


\subsection{Cohomology of Lie superalgebras}
\label{sec:cohomology}

Let $\g$ be a Lie superalgebra, $\a \leq \g$ a subsuperalgebra, and $M$ a $\g$-module. We define relative cohomology groups $\H^n(\g,\a;M)$, as in \cite{BKN-1,BKN-2}, which fit into Hochschild's framework of relative homological algebra \cite{MR0080654}.

Abstractly, these cohomology groups are so-called \textit{relative Ext groups}, $\H^n(\g,\a;M) = \Ext^n_{(\g,\a)}(\CC,M)$, and arise as relative derived functors $\Hom$ in the relative category of $(U(\g),U(\a))$-modules, as in \cite{MR1923198}. In the classical way $\Ext^n_{(\g,\a)}(M,N)$ classifies equivalence classes of $n$-fold extensions of $M$ by $N$ which are split-exact upon restriction to $\a$. The usual Yoneda splice defines a product $\Ext^n_{(\g,\a)}(M,N) \times \Ext^m_{(\g,\a)}(N',M) \to \Ext^{n+m}_{(\g,\a)}(N',N)$. One may define a cup product $\Ext_{(\g,\a)}^n(M,N) \otimes \Ext_{(\g,\a)}^m(M',N') \to \Ext^{n+m}_{(\g,\a)}(M \otimes M', N \otimes N')$. When $M = M' = N = N' = \CC$ the cup product and Yoneda splice define the same product structure. Note that $\Ext_{(\g,\a)}^\bullet(M,M)$ is a ring and a module for $\Ext_{(\g,\a)}^\bullet(\CC,\CC)$ via the cup product.

A Koszul complex to compute relative cohomology is defined as follows. The $n^\text{th}$ relative cochain group is defined to be
\[
  C^n(\g,\a;M) = \Hom_\a \left( \superext{n}(\g/\a),M \right),
\]
where $\superext{n}(\even{V} \oplus \odd{V}) = \bigoplus_{p+q=n} \ext{p}(\even{V}) \otimes S^q(\odd{V})$ is the $n^\text{th}$ \emph{superexterior power}.

The coboundary $d : C^n(\g,\a;M) \to C^{n+1}(\g,\a;M)$ is, for homogeneous $f \in C^n(\g,\a;M)$ and $\w_i \in \g/\a$, given by the formula
\begin{align*}
  df(\w_0 \wedge \ldots \wedge \w_n) = &\sum_{i = 0}^n (-1)^{\tau_i(\bar \w_0, \ldots,\bar\w_n,\bar f)} \w_i .f(\w_0 \wedge \ldots \hat \w_i \ldots \wedge \w_n) \\
 &+ \sum_{i < j} (-1)^{\sigma_{i,j}(\bar \w_0, \ldots , \bar \w_n)} f([\w_i,\w_j] \wedge \w_0 \wedge \ldots \hat \w_i \ldots \hat \w_j \ldots \wedge \w_n)
\end{align*}
where parities are defined by the following formulae which differ slightly from those of \cite{BKN-1} due to a change in indexing.
\begin{align*}
  \tau_i(\alpha_0,\ldots,\alpha_n,\beta) &= i + \alpha_i (\alpha_0 + \ldots + \alpha_{i-1} + \beta)\\
  \sigma_{i,j}(\alpha_0,\ldots,\alpha_n) &= i + j + \alpha_i \alpha_j + \alpha_i(\alpha_0 + \ldots + \alpha_{i-1}) + \alpha_j (\alpha_0 + \ldots + \alpha_{j-1})
\end{align*}
As one may expect, $d \circ d = 0$ and $d$ is a homomorphism of $\a$-modules. Thus we may define relative cohomology groups
\[
  \H^n(\g,\a;M) = \frac{\ker \left( d: C^n(\g,\a;M) \to C^{n+1}(\g,\a;M) \right)}{\im \left( d: C^{n-1}(\g,\a;M) \to C^n(\g,\a;M) \right)}.
\]
The usual (absolute) cohomology groups of $\g$ with coefficients in $M$ may be recovered by considering $\H^n(\g,0;M)$.

In the special case that $\g$ is a classical Lie superalgebra and $\a = \even{\g}$, the relative cohomology ring may be identified as $\H^\bullet(\g,\even{\g};\CC) \cong S^\bullet(\odd{\g}^*)^{\even{G}}$, \cite{BKN-1}. Being the invariants of a reductive group action, this ring is finitely generated over $\CC$.

\section{Spectral sequence}
\label{sec:spectral-sequence}

\subsection{}
\label{sec:ss-intro}

Let $\g = \even{\g} \oplus \odd{\g}$ be a classical Lie superalgebra, whose cohomology relative to $\even{\g}$ was studied in \cite{BKN-1,BKN-2}. This inspires the construction of a spectral sequence modeled upon that of Hochschild-Serre \cite[\S 2]{HS-53} to compute cohomology relative to an even subsuperalgebra $\a \leq \even{\g}$ with coefficients in a finite-dimensional $\g$-module $M$. The main theorem of this section is the following.

\begin{theorem} \label{thm:spectral-sequence}
  Let $\g = \even{\g} \oplus \odd{\g}$ be a classical Lie superalgebra, $\a \leq \even{\g}$ a subalgebra and $M$ a $\g$-module.
  \begin{enumerate}[\indent\rm (a)]
    \item There exists a spectral sequence $\{E_r^{p,q}(M)\}$ which computes cohomology and satisfies
  \[
    E_2^{p,q}(M) \cong \H^p(\g,\even{\g};M) \otimes \H^q(\even{\g},\a;\CC).
  \]
  For $1 \leq r \leq \infty$, $E_r^{\bullet,\bullet}(M)$ is a module for $E_r^{\bullet,\bullet}(\CC)$. Moreover, when $M$ is finite-dimensional, $E_2^{\bullet,\bullet}(M)$ is a Noetherian $E_2^{\bullet,\bullet}(\CC)$-module.
    \item The cohomology ring $\H^\bullet(\g,\a;M)$ is finitely generated over $S(\odd{\g}^*)^{\even{G}} = \H^\bullet(\g,\even{\g};\CC)$ by homogeneous elements. As such, $\H^\bullet(\g,\a;\CC)$ is a finitely generated $\CC$-algebra.
  \end{enumerate}
\end{theorem}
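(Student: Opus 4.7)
My plan is to build a Hochschild--Serre style spectral sequence by filtering the relative Koszul complex by degree in the symmetric algebra factor of $\superext{\bullet}(\g/\a) = \ext{\bullet}(\even{\g}/\a) \otimes S^\bullet(\odd{\g})$. Bigrading cochains via $C^n(\g,\a;M) = \bigoplus_{q+p=n} C^{q,p}$ with $C^{q,p} = \Hom_\a(\ext{q}(\even{\g}/\a) \otimes S^p(\odd{\g}), M)$, the Koszul coboundary decomposes as $d = d_{(1,0)} + d_{(0,1)} + d_{(-1,2)}$, where the subscripts record the bidegree shift: $d_{(1,0)}$ collects the even-action together with the $[\even,\even]$- and $[\even,\odd]$-bracket terms, $d_{(0,1)}$ is the odd action on $M$, and $d_{(-1,2)}$ is the $[\odd,\odd]$-bracket term (two odd inputs producing an even one). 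The decreasing filtration $F^p C^n = \bigoplus_{j \geq p} C^{n-j, j}$ is preserved by each summand and produces a first-quadrant, bounded spectral sequence converging to $\H^{p+q}(\g,\a;M)$.

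On $E_0$ only $d_{(1,0)}$ survives, and under the adjunction $C^{q,p} \cong \Hom_\a(\ext{q}(\even{\g}/\a), S^p(\odd{\g})^* \otimes M)$ this operator coincides with the Chevalley--Eilenberg coboundary for the pair $(\even{\g},\a)$ with coefficients in the $\even{\g}$-module $S^p(\odd{\g})^* \otimes M$, giving $E_1^{p,q} \cong \H^q(\even{\g},\a; S^p(\odd{\g})^* \otimes M)$. The $d_1$ differential is induced by $d_{(0,1)}$, which is the $(\g,\even{\g})$-Koszul coboundary $\delta$ applied in the coefficient factor. The crux of the argument is that, since $\even{\g}$ is reductive and $S^p(\odd{\g})^* \otimes M$ is a semisimple $\even{\g}$-module when $M$ is finite-dimensional, one has a natural isomorphism
\[
  \H^q(\even{\g},\a; V) \cong V^{\even{\g}} \otimes \H^q(\even{\g},\a;\CC)
\]
for any semisimple $\even{\g}$-module $V$: split $V = V^{\even{\g}} \oplus V'$ with $V'$ a sum of non-trivial isotypic components, and argue (using reductivity of $\even{\g}$, via a Casimir/averaging argument on the $\a$-invariant cochains) that $\H^q(\even{\g},\a; V') = 0$. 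Applied to $V = S^p(\odd{\g})^* \otimes M$ this rewrites $E_1^{\bullet,q}$ as the complex $C^\bullet(\g,\even{\g};M) \otimes \H^q(\even{\g},\a;\CC)$ with $d_1 = \delta \otimes 1$, so taking $d_1$-cohomology yields $E_2^{p,q} \cong \H^p(\g,\even{\g};M) \otimes \H^q(\even{\g},\a;\CC)$.

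The filtration is multiplicative (from the symmetric-algebra grading), so each $E_r^{\bullet,\bullet}(\CC)$ becomes a graded ring via the cup product on $C^\bullet(\g,\a;\CC)$ and $E_r^{\bullet,\bullet}(M)$ a compatible module. At the $E_2$ page, $E_2^{\bullet,\bullet}(\CC) = S^\bullet(\odd{\g}^*)^{\even{G}} \otimes \H^\bullet(\even{\g},\a;\CC)$ is a finitely generated $\CC$-algebra: the first factor by Hilbert's theorem on invariants of reductive group actions, and the second is finite-dimensional because $\dim(\even{\g}/\a) < \infty$ forces the Koszul cochain groups to be finite-dimensional. For finite-dimensional $M$, Noetherianity of $\H^\bullet(\g,\even{\g};M)$ over $S^\bullet(\odd{\g}^*)^{\even{G}}$ from \cite{BKN-1} implies Noetherianity of $E_2^{\bullet,\bullet}(M)$ over $E_2^{\bullet,\bullet}(\CC)$. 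Noetherianity descends to each $E_r$ as a subquotient, hence to $E_\infty$; since the filtration on each $\H^n(\g,\a;M)$ is finite, lifting generators through the edge surjection $E_2^{\bullet,0}(\CC) = \H^\bullet(\g,\even{\g};\CC) \twoheadrightarrow E_\infty^{\bullet,0}(\CC)$ identifies $\H^\bullet(\g,\a;\CC)$ as a finitely generated $\CC$-algebra and $\H^\bullet(\g,\a;M)$ as finitely generated over $\H^\bullet(\g,\even{\g};\CC)$. I expect the main obstacle to lie in justifying the $E_2$ identification, specifically the vanishing of $(\even{\g},\a)$-cohomology on non-trivial semisimple isotypic summands; the construction of the filtration, convergence, and the standard lifting-of-generators step are routine once that is in hand.
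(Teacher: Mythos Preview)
Your proposal is correct and follows essentially the same route as the paper: both filter $C^\bullet(\g,\a;M)$ by the symmetric (odd) degree, obtain $E_1^{p,q}\cong \H^q(\even{\g},\a;S^p(\odd{\g})^*\otimes M)$, invoke the Hochschild--Serre vanishing lemma (that $\H^\bullet(\even{\g},\a;V)=0$ for semisimple $V$ with $V^{\even{\g}}=0$) to split off the trivial isotypic component and reach the stated $E_2$, and then run the standard Noetherian-subquotient argument to pass from $E_2$ to $E_\infty$ and lift generators. Your explicit bidegree decomposition $d=d_{(1,0)}+d_{(0,1)}+d_{(-1,2)}$ is a cleaner bookkeeping device than the paper's direct verification that $d$ preserves the filtration, but the content is identical; you also correctly flag the vanishing lemma as the one non-routine step, which is precisely the lemma the paper isolates and reproves from \cite{HS-53}.
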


\subsection{Filtration of $C^\bullet(\g,\a;M)$}
\label{sec:filtration}

Recall the decomposition
\begin{equation} \label{eq:direct-sum-cochain}
  C^n(\g,\a;M) = \bigoplus_{i + j = n} C^i\left(\even{\g},\a; \Hom_\CC \left( \superext{j}(\g/\even{\g}) ,M\right)\right).
\end{equation}
Define a descending filtration on $C^n(\g,\a;M)$ by
\begin{equation} \label{eq:filtration}
  C^n(\g,\a;M)_{(p)} = \bigoplus_{\substack{i + j = n \\ i \leq n-p}}C^i\left(\even{\g},\a;\Hom_\CC\left(\superext{j}(\g/\even{\g});M\right)\right).
\end{equation}

\begin{proposition} \label{prop:grading-properties}
  Let $\g = \even{\g} \oplus \odd{\g}$ be a Lie superalgebra, $\a \leq \even{\g}$ an even subalgebra, $M$ a $\g$-module, and $C^n(\g,\a;M)_{(p)}$ the filtration defined in \ref{eq:filtration}.
  \begin{enumerate}[\indent\rm (a)]
  \item This grading respects the differential, i.e., $d(C^n(\g,\a;M)_{(p)}) \subseteq C^{n+1}(\g,\a;M)_{(p)}$, and thus $C^\bullet(\g,\a;M)_{(p)}$ is a subcomplex of $C^\bullet(\g,\a;M)$ for all $p$.
  \item $C^n(\g,\a;M)_{(p)}$ is an $\a$-submodule of $C^n(\g,\a;M)$, so $C^\bullet(\g,\a;M)_{(p)}$ is a subcomplex of $\a$-modules.
  \item The filtration is exhaustive, i.e., $C^\bullet(\g,\a;M)_{(0)} = C^\bullet(\g,\a;M)$ and $\bigcap_{p\geq 0} C^\bullet(\g,\a;M)_{(p)} = 0$.
  \end{enumerate}
\end{proposition}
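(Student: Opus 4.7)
The plan is to exploit the bigrading $(i,j)$ on $\superext{n}(\g/\a) = \bigoplus_{i+j=n} \ext{i}(\even{\g}/\a) \otimes S^j(\odd{\g})$ dual to the decomposition (\ref{eq:direct-sum-cochain}), under which the filtration $C^n(\g,\a;M)_{(p)}$ is precisely the sum of those bigraded pieces with $j \geq p$. Each of (a), (b), (c) then reduces to tracking how the Koszul differential and the $\a$-action interact with this bigrading.

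For part (a), I decompose $d$ by how it shifts bidegree. A direct inspection of the Koszul formula shows that $d$ sends a cochain of bidegree $(i,j)$ into a sum of components of bidegrees $(i+1,j)$, $(i,j+1)$, and $(i-1,j+2)$: the action terms $\w_k.f(\cdots\hat\w_k\cdots)$ contribute $(1,0)$ when $\w_k$ is even and $(0,1)$ when $\w_k$ is odd; the bracket terms $f([\w_k,\w_\ell]\cdots)$ contribute $(1,0)$ in the cases $[\even{\g},\even{\g}]\subseteq\even{\g}$ and $[\even{\g},\odd{\g}]\subseteq\odd{\g}$, and contribute $(-1,2)$ in the case $[\odd{\g},\odd{\g}]\subseteq\even{\g}$, which trades two odd slots for a single even slot. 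In every case the second coordinate is non-decreasing, so if $f$ is supported in bidegrees $j\geq p$ then so is $df$, giving $d(C^n(\g,\a;M)_{(p)}) \subseteq C^{n+1}(\g,\a;M)_{(p)}$.

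Part (b) follows from $\a$-equivariance of the bigrading: because $\a \leq \even{\g}$, the adjoint action of $\a$ on $\g/\a$ preserves both summands $\even{\g}/\a$ and $\odd{\g}$, so it preserves the bigrading of $\superext{n}(\g/\a)$. The induced action on $\Hom_\CC(\superext{n}(\g/\a), M)$ respects the decomposition (\ref{eq:direct-sum-cochain}), and hence any partial sum of bigraded pieces—in particular, the filtration—is $\a$-stable. Part (c) is purely combinatorial: at $p=0$ the condition $i\leq n-p$ is vacuous, so $C^n(\g,\a;M)_{(0)} = C^n(\g,\a;M)$; for $p>n$ the index set $\{(i,j):i+j=n,\ j\geq p,\ i,j\geq 0\}$ is empty, so $C^n(\g,\a;M)_{(p)} = 0$ and the intersection over all $p$ vanishes.

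The only substantive step is the bidegree bookkeeping in (a); the one potentially worrying case is the bracket of two odd arguments, which creates a new even slot and might at first appear to lower the filtration, but it in fact \emph{increases} the $j$-coordinate by two and so is entirely consistent with the filtration.
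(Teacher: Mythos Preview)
Your proof is correct and follows essentially the same approach as the paper's. The paper verifies (a) by evaluating $df$ on $n-p+2$ even and $p-1$ odd arguments and checking that every summand of the Koszul formula feeds $f$ more than $n-p$ even arguments; your phrasing in terms of the bidegree shifts $(+1,0)$, $(0,+1)$, $(-1,+2)$ of the three types of terms is the same computation reorganized, and arguably cleaner since it makes the ``$j$-coordinate never decreases'' statement explicit. Parts (b) and (c) match the paper's arguments directly.
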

\begin{proof}
  \begin{enumerate}[(a)]
  \item Let $f \in C^n(\g,\a;M)_{(p)}$. This means $f$ vanishes when more than $n-p$ arguments belong to $\even{\g}/\a$. We wish to show that $df \in C^{n+1}(\g,\a;M)_{(p)}$, i.e., that $df$ vanishes when more than $n-p+1$ arguments belong to $\even{\g}/\a$. Let $\alpha_0, \ldots , \alpha_{n-p+1} \in \even{\g}/\a$, while $\beta_{n-p+2},\ldots,\beta_n \in \odd{\g}$. Plugging these into the coboundary formula
    \begin{align*}
      df(\alpha_0 \wedge \ldots \wedge \beta_n)
      &= \sum_{0 \leq i \leq n-p+1} (-1)^{\tau_i(-)} \alpha_i . f(\alpha_0 \wedge \ldots \hat \alpha_i \ldots \wedge \beta_n) \\
      & + \sum_{n-p+2 \leq i \leq n} (-1)^{\tau_i(-)} \beta_i .f(\alpha_0 \wedge \ldots \hat \beta_i \ldots \wedge \beta_n) \\
      &+ \sum_{0 \leq i < j \leq n-p+1 } (-1)^{\sigma_{i,j}(-)} f([\alpha_i,\alpha_j] \wedge \alpha_0 \ldots \hat \alpha_i \ldots \hat \alpha_j \ldots \wedge \beta_n) \\
      & + \sum_{\substack{0 \leq i \leq n-p+1 \\ n-p+2 \leq j \leq n}} (-1)^{\sigma_{i,j}(-)} f([\alpha_i,\beta_j]\wedge \alpha_0 \ldots \hat \alpha_i \ldots \hat \beta_j \ldots \wedge \beta_n) \\
      &+ \sum_{n-p+2 \leq i < j \leq n} (-1)^{\sigma_{i,j}(-)} f([\beta_i,\beta_j] \wedge \alpha_0 \ldots \hat \beta_i \ldots \hat \beta_j \ldots \wedge \beta_n)
    \end{align*}
    Looking at each line of the previous equation, notice that $f$ takes in, respectively, $n-p+1$, $n-p+2$, $n-p+1$, $n-p+2$, and $n-p+3$ arguments lying in $\even{\g}/\a$. Thus each term in each summation individually vanishes. Thus we conclude $df \in C^{n+1}(\g,\a;M)_{(p)}$.
  \item Let $x \in \a$, $f \in C^n(\g,\a;M)_{(p)}$. Thus $f(\w_0 \wedge \ldots \w_{n-1})$ vanishes when $n-p+1$ of the $\w_i$ belong to $\a$. writing out the definition of $(x.f)(\w_0\wedge\ldots\wedge\w_{n-1})$ we realize that each term vanishes when $n-p+1$ of the $\w_i$ belong to $\a$, and thus $x.f \in C^n(\g,\a;M)_{(p)}$.
  \item This follows from writing out the definitions and noting $C^n(\g,\a;M)_{(n+1)} = 0$
  \end{enumerate}
\end{proof}
Proposition \ref{prop:grading-properties} guarantees the existence of a spectral sequence of $\a$-modules
\[
  E_r^{p,q}(M) \Rightarrow \H^{p+q}(\g,\a;M).
\]

\subsection{Identifying pages}
\label{sec:pages}

We proceed to identify the pages $E_0$, $E_1$, and $E_2$ of the spectral sequence. The identifications of $E_0$ and
$E_1$ remain valid for any Lie superalgebra and any $\a \leq \even{\g}$. Only in identifying the $E_2$ page is the property that $\g$ is classical used.

Before identifying pages of the spectral sequence we will require a lemma whose proof may be found in \cite[Theorem 10]{HS-53}, but for convenience is reproduced below.
\begin{lemma}
  Let $\even{\g}$ be a reductive Lie algebra, $M$ be a finite-dimensional semisimple $\even{\g}$-module such that $M^{\even{\g}} = 0$. Then $\H^n(\even{\g},\a;M) = 0$ for all $n \geq 0$.
\end{lemma}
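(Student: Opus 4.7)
The plan is to adapt the Casimir argument used to prove Whitehead's lemma for absolute Lie algebra cohomology to the relative setting. Since $M$ is finite-dimensional and semisimple with $M^{\even{\g}} = 0$, I would first decompose $M$ into a direct sum of non-trivial irreducible $\even{\g}$-modules. By additivity of relative cohomology in the coefficients, it suffices to handle each summand separately, so I reduce to the case $M = V_\lambda$ is a non-trivial finite-dimensional irreducible $\even{\g}$-module.

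Next, I would exploit reductivity, writing $\even{\g} = Z(\even{\g}) \oplus [\even{\g},\even{\g}]$, to produce a central element $z \in Z(U(\even{\g}))$ lying in the augmentation ideal (so $z$ acts as $0$ on the trivial representation $\CC$) and acting on $V_\lambda$ as a non-zero scalar $c_\lambda$. If $Z(\even{\g})$ acts non-trivially on $V_\lambda$, take $z$ to be an appropriate linear element of $Z(\even{\g})$; otherwise $V_\lambda$ is non-trivial for the semisimple part $[\even{\g},\even{\g}]$, and its standard quadratic Casimir provides such a $z$.

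To conclude, I would show that $z$ acts as zero on $\H^n(\even{\g},\a; V_\lambda) \cong \Ext^n_{(\even{\g},\a)}(\CC, V_\lambda)$. Since $z$ is central, the action it induces through the coefficient slot of relative Ext (namely multiplication by $c_\lambda$) must agree with the action induced through the first slot (namely multiplication by $0$, since $z$ annihilates $\CC$). This forces $c_\lambda \cdot \H^n(\even{\g},\a; V_\lambda) = 0$, and since $c_\lambda \neq 0$ we conclude $\H^n(\even{\g},\a; V_\lambda) = 0$.

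The main obstacle is justifying the equality of these two actions in the relative setting, since the standard argument for absolute Ext does not transcribe automatically. At the cochain level I would produce an explicit null-homotopy for the action of $z$ on $C^\bullet(\even{\g},\a; V_\lambda)$, starting from the Cartan homotopy formula $\theta_x = d \iota_{\bar x} + \iota_{\bar x} d$ (for $x \in \even{\g}$ with image $\bar x \in \even{\g}/\a$), and then extending multiplicatively to elements of $U(\even{\g})$ via a careful accounting of contractions on the Koszul complex. This construction is standard but delicate, and is essentially what is carried out in Hochschild--Serre \cite{HS-53}.
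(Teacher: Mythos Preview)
Your proposal is correct and follows the same strategy as the paper, which reproduces Hochschild--Serre \cite[Theorem 10]{HS-53}: reduce to simple $M$, split $\even{\g}$ as center plus semisimple part, and produce a central element (linear from $\z$, or the Casimir of the semisimple complement) that acts invertibly on $M$ but null-homotopically on the relative cochain complex. The only difference is packaging---the paper works at the cochain level, first using semisimplicity of the cochain groups to reduce to $\even{\g}$-invariant cocycles before applying the Casimir, whereas you phrase the same null-homotopy via the two-actions-agree principle for relative $\Ext$; your concern that this principle might fail in the relative setting is unwarranted, since Hochschild's relative homological algebra provides uniqueness of lifts up to relative homotopy exactly as in the absolute case.
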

\begin{proof}
  Suppose $M$ is simple and that $n \geq 0$. The group $Z^n(\even{g},\a;M) \subseteq C^n(\even{\g},\a;M) \subseteq C^n(\even{\g};M)$ is semisimple. The group $d(C^{n-1}(\even{\g},\a;M))$ is a submodule of $Z^n(\even{\g},\a;M)$, and as such there exists a $\even{\g}$-module complement $V$ so that $Z^n(\even{\g},\a;M) = d(C^{n-1}(\even{\g},\a;M)) \oplus V$. We notice that $\even{\g} . Z^n(\even{\g},\a;M) \subseteq d(C^{n-1}(\even{\g},\a;M))$, meaning $\even{\g} . V = 0$. Thus it suffices to show that every cocycle which is annihilated by $\even{\g}$ is a coboundary.

  Since $\even{\g}$ is reductive we may write $\even{\g} = [\even{\g},\even{\g}] \oplus \z$ where $\z$ denotes the center of $\even{\g}$. Since $M$ is simple either $\z . M = 0$ or no non-zero element of $M$ is annihilated by $Z$. Let $f$ be a cocycle which is annihilated by $\even{\g}$, let $z \in \z$, and let $\w_1,\ldots,\w_n \in \even{\g}$. Then $0 = (z.f)(\w_1 \wedge \ldots \wedge \w_n) = z. f(\w_1 \wedge \ldots \wedge \w_n)$. Thus if $\z.M \neq 0$, it follows that $f = 0$. Now we may suppose $\z.M = 0$ and $M \neq 0$.

  Let $C$ be the annihilator in $\even{\g}$ of $M$, so that $C \supseteq Z$. Since the invariant submodule $M^\even{\g} = 0$, it must be the case $C \neq \even{\g}$. Now $C \cap [\even{\g},\even{\g}]$ is an ideal in the semisimple Lie algebra $[\even{\g},\even{\g}]$, meaning there must be a complementary ideal $S$. Of course, $S$ is a non-zero semisimple ideal of $\even{\g}$, which may be decomposed as $\even{\g} = S \oplus C$. Now $M\res_s$ is simple and the representation of $S$ is one-to-one. Thus the Casimir operator of this representation, $\Gamma$, is an automorphism of $M$ which commutes with all $\even{\g}$-operators on $M$. Furthermore, since $[S,C] = 0$, it is seen that for any relative cocycle $f$, $\Gamma \circ f = dg$ is a coboundary. Hence $f = \Gamma^{-1} \circ dg = d( \Gamma^{-1} \circ g)$ as desired.
\end{proof}

\begin{proposition}
  The first three pages of the spectral sequence associated to the filtration of Section \ref{sec:filtration} may be identified as follows.
  \begin{enumerate}[\indent\rm (a)]
    \itemsep.5em
  \item $E_0^{p,q} \cong C^q\left(\even{\g},\a;\Hom_\CC \left(\superext{p}(\g/\even{\g}),M\right)\right)$,
  \item $E_1^{p,q} \cong \H^q\left(\even{\g},\a;\Hom_\CC\left(\superext{p}(\g/\even{\g}),M\right)\right)$,
  \item $E_2^{p,q} \cong \H^p(\g,\even{\g};M) \otimes \H^q(\even{\g},\a;\CC)$.
  \end{enumerate}
\end{proposition}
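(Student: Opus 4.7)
The identification is essentially a bicomplex argument, with the classical structure intervening only at the last stage. I would proceed in three steps corresponding to $E_0$, $E_1$, and $E_2$.

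For $E_0$, I would unpack the definition of the filtration. An element of $F^p C^n / F^{p+1} C^n$ is a cochain vanishing on wedges with more than $n-p$ arguments from $\even{\g}/\a$ but nonzero on some wedge with exactly $n-p$ such arguments, i.e., exactly $p$ arguments from $\g/\even{\g}$. The decomposition (\ref{eq:direct-sum-cochain}) then gives $E_0^{p,q} \cong C^q(\even{\g},\a;\Hom_\CC(\superext{p}(\g/\even{\g}),M))$ term-by-term. To identify $d_0$ (and hence $E_1$), I would evaluate $df$ on $q+1$ arguments $\alpha_0,\ldots,\alpha_q \in \even{\g}/\a$ together with $p$ arguments $\beta_{q+1},\ldots,\beta_{q+p} \in \odd{\g}$ and sort the five kinds of terms from the coboundary formula. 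The $\beta_i.f(\ldots)$ and $[\beta_i,\beta_j]$ and $[\alpha,\alpha]$-into-odd-slot contributions either force $f$ to be evaluated on too few odd arguments (hence vanish on the associated graded) or land in a higher filtration piece, while the $\alpha_i . f$ terms plus the mixed bracket $[\alpha_i,\beta_j]$ terms reassemble, via the Leibniz identity $(\alpha.g)(\omega)=\alpha.g(\omega)-g(\alpha.\omega)$, into the coboundary of the $(\even{\g},\a)$-cochain complex with coefficients in the $\even{\g}$-module $\Hom_\CC(\superext{p}(\g/\even{\g}),M)$. This gives $E_1^{p,q} \cong \H^q(\even{\g},\a;\Hom_\CC(\superext{p}(\g/\even{\g}),M))$.

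For $E_2$, I would invoke the classical hypothesis. Because $\even{G}$ is reductive and $\superext{p}(\g/\even{\g})$ and $M$ are finite-dimensional rational $\even{G}$-modules, so is $N := \Hom_\CC(\superext{p}(\g/\even{\g}),M)$, and it decomposes as $N = N^{\even{\g}} \oplus N'$ with $(N')^{\even{\g}} = 0$. The lemma gives $\H^q(\even{\g},\a;N') = 0$ for all $q$, so $E_1^{p,q} \cong \H^q(\even{\g},\a;N^{\even{\g}})$. Since $\even{\g}$ acts trivially on $N^{\even{\g}}$ (and hence so does $\a$), the cochain complex splits as a tensor product of complexes and we obtain
\[
  E_1^{p,q} \cong \H^q(\even{\g},\a;\CC) \otimes \Hom_{\even{\g}}(\superext{p}(\g/\even{\g}),M).
\]
Finally I would identify $d_1$ as $\operatorname{id} \otimes d'$, where $d'$ is the differential of $C^\bullet(\g,\even{\g};M) = \Hom_{\even{\g}}(\superext{\bullet}(\g/\even{\g}),M)$. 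The cleanest way is to check this on the bottom edge $q=0$, where $E_1^{p,0}$ is precisely $C^p(\g,\even{\g};M)$ and the induced differential is visibly the relative coboundary (the $\even{\g}$-invariance kills the $\alpha_i.f$ terms and the $[\even{\g},\even{\g}]$-brackets go to zero mod $\even{\g}$), and then extend to general $q$ by naturality in the $\even{\g}$-module argument. Taking cohomology in the $p$-direction gives the desired $E_2^{p,q} \cong \H^p(\g,\even{\g};M) \otimes \H^q(\even{\g},\a;\CC)$.

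The main obstacle is the step identifying $d_0$: one must be careful in tracking which terms of the superalgebra coboundary formula survive in the associated graded, and in particular in recognizing the mixed-bracket contributions as part of the coefficient-module action rather than as genuinely new terms. The sign conventions in $\tau_i$ and $\sigma_{i,j}$ require some care. After that, the passage from $E_1$ to $E_2$ is essentially a clean application of the lemma together with the vanishing of the $\g$-action on $\CC$ in the Koszul differential for $C^\bullet(\g,\even{\g};M)$.
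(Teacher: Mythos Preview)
Your plan matches the paper's proof closely: identify $E_0$ from the direct-sum decomposition of the filtration, show $d_0$ agrees with the relative $(\even{\g},\a)$-coboundary by sorting the five term types in the superalgebra differential, then for $E_2$ split the coefficient module using reductivity, kill the noninvariant summand with the lemma, and pull the trivial-coefficient factor out as a tensor.

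There is one bookkeeping slip in your $d_0$ step. You list the $[\alpha_i,\alpha_j]$ contributions among the vanishing terms (``$[\alpha,\alpha]$-into-odd-slot''), but $[\alpha_i,\alpha_j]\in\even{\g}/\a$ is even, and after removing $\alpha_i,\alpha_j$ you are left with exactly $q$ even and $p$ odd arguments, so $f$ does \emph{not} vanish there. These surviving $[\alpha_i,\alpha_j]$ terms are precisely the second sum $\sum_{i<j}(-1)^{i+j}f([\alpha_i,\alpha_j]\wedge\cdots)$ in the $(\even{\g},\a)$-coboundary; the $\alpha_i.f$ and $[\alpha_i,\beta_j]$ terms you keep reassemble only into the first (module-action) sum via Leibniz. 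With that correction your identification of $d_0$ goes through exactly as in the paper.
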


\begin{proof}
  We proceed in steps, identifying the pages in sequence.
  \begin{enumerate}[(a)]
\item   By definition, $E_0^{p,q} = C^{p+q}(\g,\a;M)_{(p)}/C^{p+q}(\g,\a;M)_{(p+1)}$. Using the direct sum decomposition of Equation \ref{eq:filtration}, this is exactly $C^q\left(\even{\g},\a;\Hom_\CC\left(\superext{p}(\g/\even{\g}),M\right)\right)$.
\item Functoriality of the isomorphism of (a), i.e., $E_0^{p,\bullet} \cong C^\bullet(\even{\g},\a;\Hom_\CC(S^p(\g/\even{\g}),M))$ as complexes will imply their cohomologies are equal, i.e.,  $E_1^{p,q} \cong \H^q(\even{\g},\a;\Hom_\CC(S^p(\g/\even{\g}),M))$.

  To deduce functoriality of the isomorphism it will suffice to chase the following diagram.
    \[
    \begin{tikzcd}
      C^{p+q}(\g,\a;M)_{(p)} \arrow[r,"d_{(\g,\a)}"] \arrow[d,twoheadrightarrow] & C^{p+q+1}(\g,\a;M)_{(p)} \arrow[d,twoheadrightarrow] \arrow[dd,bend left=35,"\pi"] \\
      E_0^{p,q} \arrow[r,"d_0"] \arrow[d,"\cong"] & E_0^{p,q+1} \arrow[d,"\cong"] \\
      C^q(\even{\g},\a;\Hom_\CC(S^p(\odd{\g}),M)) \arrow[r,"d_{(\even{\g},\a)}"] \arrow[uu,bend left=35,"i"] & C^{q+1}(\even{\g},\a;\Hom_\CC(S^p(\odd{\g}),M))
    \end{tikzcd}
  \]
  With section $i$ corresponding to the direct sum decomposition given in Equation \ref{eq:direct-sum-cochain}. The goal is to show the composition $ \pi \circ d_{(\g,\a)}\circ i = d_{(\even{\g},\a)}$. Since $d_0$ is defined by $d_{(\g,\a)}$, this will show the bottom square commutes, resulting in an isomorphism of complexes.

  Choose $f \in C^q(\even{\g},\a;\Hom_\CC(S^p(\odd{\g}),M))$, and notice that $df$ is given by usual Lie algebra differential
  \[
df(\w_0 \wedge \ldots \w_q) = \sum_{i = 0}^q (-1)^i \w_i.f(\w_0 \wedge \ldots \hat \w_i \ldots \wedge \w_q) + \sum_{i < j} (-1)^{i+j}f([\w_i,\w_j] \wedge \w_0 \wedge \ldots \hat \w_i \ldots \hat \w_j \ldots \wedge \w_q)
  \]
  Set $\tilde f = i(f) \in C^{p+q}(\g,\a;M)$. The differential is given by the Lie superalgebra cohomology differential, and we arrive at a formula for $d_{(\g,\a)}f(\w_0 \wedge \ldots \wedge \w_{p+q})$. However, because we are taking a quotient $\pi$, it only matters how $d_{(\g,\a)}\tilde{f}$ behaves with $q+1$ even arguments and $p$ odd arguments. Thus we investigate
  \begin{align*}
    d_{(\g,\a)}f(\alpha_0\wedge \ldots \wedge \alpha_q \wedge \beta_1 \wedge \ldots \wedge \beta_p) &= \sum_{i = 0}^q (-1)^{\tau_i(-)} \alpha_i.\tilde{f}(\alpha_0\wedge \ldots \hat \alpha_0 \ldots \wedge \alpha_q \wedge \beta_1 \wedge \ldots \wedge \beta_p) \\
                                                                                                    &+ \sum_{i = q+1}^{p+q} (-1)^{\tau_i(-)} \beta_{i - q}.f(\alpha_0 \wedge \ldots \wedge \alpha_q \wedge \beta_1 \wedge \ldots \hat \beta_{i-q} \ldots \wedge \beta_p) \\
                                                                                                    &+ \sum_{0 \leq i < j \leq q} (-1)^{\sigma_{i,j}(-)}f([\alpha_i,\alpha_j] \wedge \alpha_0 \ldots \hat \alpha_i \ldots \hat \alpha_j \ldots \beta_p) \\
                                                                                                    &+ \sum_{\substack{0 \leq i \leq q \\ q+1 \leq j \leq p+q}} (-1)^{\sigma_{i,j}(-)} f([\alpha_i,\beta_{j-q}] \wedge \alpha_0 \ldots \hat \alpha_i \ldots \hat \beta_{j-q} \ldots \beta_p) \\
    &+ \sum_{q+1 \leq i < j \leq p+q} (-1)^{\sigma_{i,j}(-)} f([\beta_{i-q},\beta_{j-q}]\wedge \alpha_0 \ldots \hat \beta_i \ldots \hat \beta_j \ldots \wedge \beta_p)
  \end{align*}
  By construction, $\tilde{f}$ vanishes unless exactly $q$ arguments are even and $p$ arguments are odd. This only occurs in the first, third, and fourth lines of the preceding sum. Working out the relevant signs yields
\[
  \tau_i(\underbrace{\bar 0,\ldots,\bar 0}_{q+1},\underbrace{\bar 1,\ldots,\bar 1}_p,\bar f) = i \text{ when } i \leq q
\]
\[
    \sigma_{i,j}(\underbrace{\bar 0,\ldots,\bar 0}_{q+1},\underbrace{\bar 1,\ldots,\bar 1}_p) = \begin{cases}
      i + j &\text{ if } i,j \leq q \\
      i - q - 1 &\text{ if } i \leq q, j \geq q+1
    \end{cases}
 \]
So the previous equation for $d_{(\g,\a)}\tilde f$ becomes
  \begin{align*}
    d_{(\g,\a)}f(\alpha_0\wedge \ldots \wedge \alpha_q \wedge \beta_1 \wedge \ldots \wedge \beta_p) &= \sum_{i = 0}^q (-1)^{i} \alpha_i.\tilde{f}(\alpha_0\wedge \ldots \hat \alpha_0 \ldots \wedge \alpha_q \wedge \beta_1 \wedge \ldots \wedge \beta_p) \\
                                                                                                    &+ \sum_{0 \leq i < j \leq q} (-1)^{i+j}f([\alpha_i,\alpha_j] \wedge \alpha_0 \ldots \hat \alpha_i \ldots \hat \alpha_j \ldots \beta_p) \\
                                                                                                    &- \sum_{\substack{0 \leq i \leq q \\ q+1 \leq j \leq p+q}} (-1)^{i} f(\alpha_0 \ldots \hat \alpha_i \ldots \wedge \alpha_q \wedge [\alpha_i,\beta_{j-q}] \wedge \beta_1\ldots \hat \beta_{j-q} \ldots \beta_p) \\
  \end{align*}
  Now if we compute $d_{(\even{\g},\a)}f$, accounting for the action on $\Hom_\CC(S^p(\odd{\g}),M)$, we arrive at the same formula.
\item Notice first that by semisimplicity $\Hom_\CC(S^n(\g/\even{\g}),M) \cong \Hom_{\even{\g}}(S^n(\g/\even{\g}),M) \oplus V$ where $V$ is some complement with $V^{\even{\g}} = 0$. By the lemma,
  \[
    E_1^{p,q} \cong \H^q(\even{\g},\a;\Hom_{\even{\g}}(S^p(\g/\even{\g}),M)) \oplus \H^q(\even{\g},\a;V) = \H^q(\even{\g},\a;\Hom_{\even{\g}}(S^p(\g/\even{\g}),M)).
  \]
  Because $\even{\g}$ acts trivially on $\Hom_{\even{\g}}(S^p(\g/\even{\g}),M)$, we may conclude that $E_1^{p,q} \cong \H^q(\even{\g},\a;\CC) \otimes \Hom_{\even{\g}}(S^p(\g/\even{\g}),M)$. This association is functorial, i.e., induces an isomorphism $E_1^{\bullet,q} \cong \H^q(\even{\g},\a;\CC) \otimes \Hom_\CC(S^\bullet(\g/\even{\g}),M)$ as complexes. Therefore, we may conclude that $E_2^{p,q} \cong \H^q(\even{\g},\a;\CC) \otimes \H^p(\g,\even{\g};M)$.
  \end{enumerate}
\end{proof}

\subsection{Proof of main theorem}
\label{sec:proof-of-fg}

Here we present a proof of the final statement of Theorem \ref{thm:spectral-sequence}. The line of reasoning follows the one given in \cite[Theorem 1.4]{FP-infinitesimal}.

\begin{proposition}
  Under the hypotheses of Theorem \ref{thm:spectral-sequence}, when $M$ is a finite-dimensional $\g$-module, $E_r^{\bullet,\bullet}(M)$ is a Noetherian $E_r^{\bullet,\bullet}(\CC)$-module, for $2 \leq r \leq \infty$.
\end{proposition}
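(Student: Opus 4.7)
The plan is to adapt the Friedlander--Parshall strategy of \cite[Theorem 1.4]{FP-infinitesimal} to this bigraded setting while exploiting the bounded vertical extent of the spectral sequence. Let $N = \dim(\even{\g}/\a)$. Since $\Lambda^q((\even{\g}/\a)^*) = 0$ for $q > N$, the Koszul complex computing $\H^\bullet(\even{\g},\a;\CC)$ is concentrated in degrees $q \leq N$, so $E_2^{p,q}(M) = 0$ for $q > N$. This vanishing passes to every subquotient, hence $d_s = 0$ for every $s \geq N+2$ and consequently $E_\infty^{\bullet,\bullet}(M) = E_{N+2}^{\bullet,\bullet}(M)$. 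Thus $r = \infty$ collapses to a specific finite page, and it suffices to establish the conclusion by induction on $2 \leq r \leq N+2$.

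The engine driving the induction is the bidegree of the differential: $d_r$ has bidegree $(r,1-r)$, and since $1-r < 0$ for $r \geq 2$, the first-quadrant hypothesis forces $d_r$ to vanish identically on the bottom row $E_r^{\bullet,0}(\CC)$. Because $d_r$ is a derivation with respect to the multiplicative structure, the $E_r^{\bullet,0}(\CC)$-action on $E_r^{\bullet,\bullet}(M)$ therefore commutes with $d_r$ up to sign. Hence $\ker d_r$ and $\im d_r$ are $E_r^{\bullet,0}(\CC)$-submodules of $E_r^{\bullet,\bullet}(M)$, and the induced action on $E_{r+1}^{\bullet,\bullet}(M) = \ker d_r / \im d_r$ factors through the graded quotient $\CC$-algebra $E_{r+1}^{\bullet,0}(\CC)$ of $E_r^{\bullet,0}(\CC)$. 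Iterating, every $E_r^{\bullet,0}(\CC)$ is a graded quotient of $E_2^{\bullet,0}(\CC) = \H^\bullet(\g,\even{\g};\CC) = S(\odd{\g}^*)^{\even{G}}$, hence a finitely generated (and therefore Noetherian) $\CC$-algebra by \cite{BKN-1}.

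For the base case $r = 2$, the cochain group $(S(\odd{\g}^*) \otimes M)^{\even{G}}$ computing $\H^\bullet(\g,\even{\g};M)$ is finitely generated over $S(\odd{\g}^*)^{\even{G}}$ by reductive invariant theory applied to the finite-dimensional module $M$, and the cochain differential is $S(\odd{\g}^*)^{\even{G}}$-linear (the cochain-level cup product with the zero-differential ring $C^\bullet(\g,\even{\g};\CC) = S(\odd{\g}^*)^{\even{G}}$ commutes with $d$); hence $\H^\bullet(\g,\even{\g};M)$ is finitely generated over $S(\odd{\g}^*)^{\even{G}}$. Since $\H^\bullet(\even{\g},\a;\CC)$ is finite-dimensional over $\CC$, the product $E_2^{\bullet,\bullet}(M) \cong \H^\bullet(\g,\even{\g};M) \otimes \H^\bullet(\even{\g},\a;\CC)$ is finitely generated over $E_2^{\bullet,0}(\CC)$. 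In the inductive step, finite generation of $E_r^{\bullet,\bullet}(M)$ over the Noetherian ring $E_r^{\bullet,0}(\CC)$ gives that $\ker d_r$ is finitely generated, hence so is its quotient $E_{r+1}^{\bullet,\bullet}(M)$, now viewed over $E_{r+1}^{\bullet,0}(\CC)$. Applying the same argument with $M = \CC$ yields $E_r^{\bullet,\bullet}(\CC)$ finitely generated over $E_r^{\bullet,0}(\CC)$; by Eakin--Nagata this makes $E_r^{\bullet,\bullet}(\CC)$ itself Noetherian, whence $E_r^{\bullet,\bullet}(M)$ is a Noetherian $E_r^{\bullet,\bullet}(\CC)$-module.

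The principal obstacle is that $\ker d_r$ is not an $E_r^{\bullet,\bullet}(\CC)$-submodule of $E_r^{\bullet,\bullet}(M)$ in general; it is only a submodule over the subring of $d_r$-cycles, so one cannot directly chain Noetherianness across pages using the full ring action. The technical fix that makes the induction go through cleanly is to shift attention throughout to the bottom-row subring $E_r^{\bullet,0}(\CC)$, which lies entirely inside the $d_r$-cycles by the bidegree constraint and over which both finite generation and Noetherianness propagate from page to page.
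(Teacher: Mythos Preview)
Your proof is correct and follows the same Friedlander--Parshall strategy as the paper: both establish that $E_2^{\bullet,\bullet}(M)$ is Noetherian over the bottom-row ring $S(\odd{\g}^*)^{\even{G}} = E_2^{\bullet,0}(\CC)$, use that this ring consists of permanent cycles so its action passes to every page, and conclude Noetherianness of each $E_r^{\bullet,\bullet}(M)$ as a subquotient. The paper is terser---it jumps straight from $E_2$ to $E_\infty$ by calling the latter a ``section'' of the former---whereas you run an explicit page-by-page induction and spell out why the module structure descends; your appeal to Eakin--Nagata is harmless but unnecessary, since a module Noetherian over a subring is automatically Noetherian over any overring.
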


\begin{proof}
  Let $M$ be a finite-dimensional $\g$-module. $E_2^{\bullet,\bullet}(M)$ is a Noetherian $S^\bullet(\odd{\g}^*)^{\even{G}}$-module via the map
  \[
    S^\bullet(\odd{\g}^*)^{\even{G}} \hookrightarrow E_2^{\bullet,0}(\CC) \subseteq E_2^{\bullet,\bullet}(\CC).
  \]
  $E_\infty^{\bullet,\bullet}(M)$, being a section of $E_2^{\bullet,\bullet}(M)$ is a Noetherian $S^\bullet(\odd{\g}^*)^{\even{G}}$-module via the map
  \[
    S^\bullet(\odd{\g}^*)^{\even{\g}} \to E_\infty^{\bullet,0}(\CC) \subseteq E_\infty^{\bullet,\bullet}(\CC).
  \]
  Consequently, $E_\infty^{\bullet,\bullet}(M)$ is a Noetherian $E_\infty^{\bullet,\bullet}(\CC)$-module.

\end{proof}

\begin{proposition}
  The edge homomorphism of the spectral sequence corresponds to restriction.
\end{proposition}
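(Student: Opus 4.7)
My plan is to identify the bottom-row edge homomorphism by lifting a cohomology class through the filtration and tracking its image at each page. The relevant edge map is the composition
\[
  \H^p(\g, \even{\g}; M) \cong E_2^{p,0}(M) \twoheadrightarrow E_\infty^{p,0}(M) = F^p \H^p(\g, \a; M) \hookrightarrow \H^p(\g, \a; M),
\]
which is well-defined since every outgoing differential from $(p,0)$ lands outside the first quadrant and since $C^p(\g,\a;M)_{(p+1)} = 0$ (by the construction of the filtration) forces $F^{p+1}\H^p(\g,\a;M) = 0$. The goal is to show this composition is the usual restriction map $\H^p(\g, \even{\g}; M) \to \H^p(\g, \a; M)$, which at the cochain level is pullback along the canonical surjection $\superext{p}(\g/\a) \twoheadrightarrow \superext{p}(\g/\even{\g})$.

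The key observation is that, given a representative $f \in C^p(\g, \even{\g}; M) = \Hom_{\even{\g}}(\superext{p}(\g/\even{\g}), M)$, the pulled-back cochain $\tilde f \in C^p(\g, \a; M)$ is automatically $\even{\g}$-invariant, is a cocycle for $d_{(\g,\a)}$ (because pullback commutes with the coboundary formulas, as was essentially already verified in the proof of the previous proposition), and vanishes on every tuple containing an argument in $\even{\g}/\a$. This last property is exactly the statement that $\tilde f \in C^p(\g, \a; M)_{(p)}$, so its cohomology class lies in $F^p \H^p(\g, \a; M) = E_\infty^{p,0}(M)$. It then remains to trace $\tilde f$ back up through the page identifications: its image in $E_0^{p,0}(M) \cong \Hom_\CC(\superext{p}(\g/\even{\g}), M)^\a$ is $f$ itself; in $E_1^{p,0}(M) \cong \H^0(\even{\g}, \a; \Hom_\CC(\superext{p}(\g/\even{\g}), M))$ it lies in the $\even{\g}$-invariant summand coming from the semisimple splitting; and in $E_2^{p,0}(M) \cong \H^p(\g, \even{\g}; M) \otimes \H^0(\even{\g}, \a; \CC)$ it becomes $[f] \otimes 1$. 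Consequently the edge map sends $[f] \otimes 1 \mapsto [\tilde f]$, identifying it with the restriction map.

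The main technical point is to verify that the semisimplicity splitting used to obtain the $E_2$ identification acts as the identity on an already $\even{\g}$-invariant $f$. This is immediate, because that splitting is realized by $\even{\g}$-equivariant projection onto the invariant summand of $\Hom_\CC(\superext{p}(\g/\even{\g}), M)$, and $f$ lies in that summand by definition. Beyond this, the argument reduces to routine diagram-chasing through the isomorphisms spelled out in the proof of the previous proposition. As an immediate payoff, since $E_\infty^{\bullet,0}(\CC)$ is a quotient of $E_2^{\bullet,0}(\CC) \cong \H^\bullet(\g, \even{\g}; \CC) = S^\bullet(\odd{\g}^*)^{\even{G}}$, this identification realizes $\H^\bullet(\g, \a; \CC)$ as an integral extension of a homomorphic image of $S^\bullet(\odd{\g}^*)^{\even{G}}$ via restriction, as advertised in the introduction.
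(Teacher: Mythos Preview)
Your proof is correct and follows essentially the same approach as the paper: both observe that a cochain in $C^p(\g,\even{\g};M)$, when pulled back to $C^p(\g,\a;M)$, lands in the smallest nonzero filtration piece $C^p(\g,\a;M)_{(p)}$, so its class already lies in $E_\infty^{p,0}$. The paper's argument is a two-line sketch (restriction respects the filtration and hence descends to the associated graded), whereas your explicit element chase through the $E_0$, $E_1$, and $E_2$ identifications fills in details the paper leaves implicit.
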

\begin{proof}
The restriction map $C^n(\g,\even{\g};\CC) \xrightarrow{\resmap} C^n(\g,\a;\CC)$ induces a map on cohomology $\H^n(\g,\even{\g};\CC) \xrightarrow{\resmap^*} \H^n(\g,\a;\CC)$. Because $\resmap$ repects the filtration of Section \ref{sec:filtration}, the map $\resmap^*$ will respect the induced filtration on cohomology, i.e., $F^p \H^n(\g,\even{\g};\CC) \xrightarrow{\resmap^*} F^p\H^n(\g,\a;\CC)$. This descends to a map on the associated graded of each cohomology ring, which may be precomposed with the projection onto associated graded as follows
  \[
    \H^n(\g,\even{\g};\CC) \to \Gr\left(\H^n(\g,\even{\g};\CC) \right) \to \Gr\left( \H^n(\g,\a;\CC) \right)
  \]

\end{proof}

\subsection{}
\label{sec:computation}

  In many instances, Lie superalgebra cohomology $\H^\bullet(\g;\CC) = \H^\bullet(\g,0;\CC)$ will vanish in all but finitely many degrees (see \cite{fuks1984cohomology} or \cite[Th\'eor\`eme 5.3]{MR1450424}), leading one to conclude the ring has Krull dimension zero and thus uninteresting geometry. Here it is shown that for $\g = \gl(1|1)$ and $\a$ generated by $\operatorname{diag}(1 \mid 1) \in \gl(1|1)$, $\H^\bullet(\g,\a;\CC)$ is nonzero in infinitely many degrees. From this, we may conclude $\H^\bullet(\g,\a;\CC)$ has positive Krull dimension. This is an especially nice case; $\a$ acts trivially on $\gl(1|1)$ so every map $\superext{n}(\g/\a) \to \CC$ is $\a$-invariant.

  Take the basis for $\gl(1|1)/\a$
  \[
    \alpha = \begin{pmatrix} 1 & 0 \\ 0 & 0\end{pmatrix}\text{ , } \beta_1 = \begin{pmatrix}0 & 1 \\ 0 & 0\end{pmatrix} \text{ , } \beta_2 = \begin{pmatrix}0 & 0 \\ 1 & 0\end{pmatrix}
  \]
  $\superext{2n}(\g/\a)$ has basis $\{\alpha \otimes \beta_1^i \beta_2^j\}_{i + j + 1 = n} \cup \{\beta_1^i \beta_2^j\}_{i+j = n}$. Consider $f \in C^{2i}(\g,\a;\CC)$ which maps $\beta_1^n \beta_2^n$ to 1 and all other basis vectors to zero. Since $\CC$ has the trivial action, $df$ has the form
  \[
    df(\w_0 \wedge \ldots \wedge \w_{2n}) = \sum_{i = 0}^p (-1)^{\sigma_{i,j}(\bar \w_0,\ldots,\bar\w_{2n})} f([\w_i,\w_j] \wedge \w_0 \wedge \ldots \hat \w_i \ldots \hat \w_j \ldots \wedge \w_{2n})
  \]
  By inspection, $df$ will vanish on all basis vectors $\beta_1^i \beta_2^j$ and $df(\alpha \otimes \beta_1^i \beta_2^j) = (i-j) f(\beta_1^i \beta_2^j)$. This is $0$ when $i,j \neq n$ by definition of $f$, and when $i = j = n$ this is zero because the coefficient vanishes. So $f$ is a cocycle.

  Suppose $dg = f$ for some $g \in C^{2n-1}(\g,\a;\CC)$. Then we compute $dg(\beta_1^n \beta_2^n)$, which is a sum of terms of the form $(-1)^{\sigma_{i,j}(-)} g([\beta_k, \beta_l] \wedge \beta_1^{n_1} \wedge \beta_2^{n_2}$, each of which vanishes individually so that $dg(\beta_1^n \beta_2^n) = 0$.

  Therefore, $f$ is \emph{not} a coboundary. So for every $n \geq 2$, $\H^{2n}(\g,\a;\CC) \neq 0$. This shows that cohomology relative to an even subalgebra heuristically lies somewhere between the results of Fuks-Leites \cite{fuks1984cohomology} and Boe-Kujawa-Nakano \cite{BKN-1}.

\section{Structure of cohomology rings}
\label{sec:structure}

\subsection{Collapsing at $E_2$}
\label{sec:collapse}

The following theorem is motivated by \cite[Proposition 3.1]{MR3233523}. The reader should recall that an algebra $A$ is \emph{Cohen-Macaulay} if there is a polynomial subalgebra over which $A$ is a finite and free module, see \cite[\S 5.4]{MR1634407}.

\begin{proposition}
  \label{prop:collapse-CM}
  Let $\g = \even{\g} \oplus \odd{\g}$ be a classical Lie superalgebra, and $\a \leq \even{\g}$ a subalgebra. If the spectral sequence constructed in Section \ref{sec:spectral-sequence} collapses at $E_2$ (i.e., if $E_2^{\bullet,\bullet}(\CC) \cong E_\infty^{\bullet,\bullet}(\CC)$), then $\H^\bullet(\g,\a;\CC)$ is a Cohen-Macaulay ring.
\end{proposition}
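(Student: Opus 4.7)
The plan is to transfer Cohen-Macaulay-ness from the associated graded of $\H^\bullet(\g,\a;\CC)$ back to the ring itself, where the associated graded is identified with $E_2^{\bullet,\bullet}(\CC)$ via the collapse hypothesis. Write $A = \H^\bullet(\g,\a;\CC)$ with the filtration inherited from the cochain filtration of Section \ref{sec:filtration}. Multiplicativity of the spectral sequence combined with collapse at $E_2$ yields an isomorphism of bigraded $\CC$-algebras
\[
  \Gr(A) \;\cong\; E_\infty^{\bullet,\bullet}(\CC) \;=\; E_2^{\bullet,\bullet}(\CC) \;\cong\; \H^\bullet(\g,\even{\g};\CC) \otimes_\CC \H^\bullet(\even{\g},\a;\CC).
\]

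The next step is to verify that the right-hand side is Cohen-Macaulay. The first tensor factor $\H^\bullet(\g,\even{\g};\CC) = S(\odd{\g}^*)^{\even{G}}$ is the ring of invariants of a linearly reductive group acting on a polynomial ring, hence Cohen-Macaulay by the Hochster-Roberts theorem. The second tensor factor $\H^\bullet(\even{\g},\a;\CC)$ is computed from the finite-dimensional complex $\Hom_\a(\ext{\bullet}(\even{\g}/\a),\CC)$, so it is a finite-dimensional graded $\CC$-algebra of Krull dimension zero, and trivially Cohen-Macaulay. Over a field, a tensor product of Cohen-Macaulay graded $\CC$-algebras is Cohen-Macaulay: if $R$ is finite and free over a polynomial subring $P_R$ and $S$ is finite and free over $P_S$, then $R \otimes_\CC S$ is finite and free over $P_R \otimes_\CC P_S$, which is itself a polynomial ring.

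Finally, I would lift Cohen-Macaulay-ness from $\Gr(A)$ to $A$. Choose homogeneous elements $\bar p_1, \ldots, \bar p_d \in \Gr(A)$ generating a polynomial subring over which $\Gr(A)$ is finite and free, and select homogeneous lifts $p_i \in A$. Proposition \ref{prop:grading-properties}(c) ensures the filtration $F^\bullet A$ is bounded in each cohomological degree, so a standard leading-term argument shows that the $p_i$ are algebraically independent in $A$, that $A$ is module-finite over the polynomial subring $\CC[p_1, \ldots, p_d]$, and that a lift of any homogeneous free basis of $\Gr(A)$ over $\CC[\bar p_1, \ldots, \bar p_d]$ yields a free basis of $A$ over $\CC[p_1, \ldots, p_d]$. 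The main obstacle I anticipate is precisely this lifting step: one must verify both algebraic independence (ruling out spurious relations arising from filtration jumps) and freeness (promoting a graded free basis to a filtered one). This is the classical graded-to-filtered transfer principle for Cohen-Macaulay rings, and its hypotheses are met here because the filtration on each $\H^n(\g,\a;\CC)$ is separated and of finite length.
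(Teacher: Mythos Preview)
Your proof is correct and follows essentially the same route as the paper: both identify $\Gr\bigl(\H^\bullet(\g,\a;\CC)\bigr) \cong E_2 = S(\odd{\g}^*)^{\even{G}} \otimes \H^\bullet(\even{\g},\a;\CC)$, invoke Hochster--Roberts for the first factor and finite-dimensionality for the second, and deduce freeness over a polynomial subring. The paper phrases the freeness via the row filtration $U_m = \sum_{q \leq m} E_2^{\bullet,q}$ and splitting of $U_m \twoheadrightarrow U_m/U_{m-1}$, whereas you package it as a tensor product of Cohen--Macaulay rings and are more explicit about the final lift from $\Gr(A)$ to $A$, but the substance is the same.
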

\begin{proof}
  The spectral sequence $E_2^{\bullet,\bullet} = E_\infty^{\bullet,\bullet}$ is a filtered version of the cohomology ring $\H^\bullet(\g,\a;\CC)$. As such, if $\zeta \in E_2^{i,j}$ and $\eta \in E_2^{r,s}$, then $\zeta \cdot \eta \in \sum_{\ell \geq 0} E_2^{i + r + \ell, j + s - \ell}$. Because of this, for any $m \geq 0$, the direct sum of the lowest $m$ rows, denoted $U_m = \sum_{q \leq m} E_2^{\bullet,q}$, is a module for the bottom row $U_0 = E_0^{\bullet,0} \cong \H^0(\even{\g},\a;\CC) \otimes S^\bullet(\odd{\g}^*)^{\even{G}} \cong S^\bullet(\odd{\g}^*)^{\even{G}}$, which by \cite{MR0347810} is a Cohen-Macaulay ring. Because the spectral sequence collapses, $E_2 = E_\infty$ and the quotients $U_m / U_{m-1} \cong \H^m(\even{\g},\a;\CC) \otimes S^\bullet(\odd{\g}^*)^{\even{G}}$ are free $S^\bullet(\odd{\g}^*)^{\even{G}}$-modules. This means the quotient maps $U_m \to U_m / U_{m-1}$ split as maps of $S(\odd{\g}^*)^{\even{G}}$-modules and the proposition follows.
\end{proof}

\subsection{Applications}
\label{sec:examples}

In this section we present some applications in which we use the spectral sequence of Section \ref{sec:spectral-sequence} to compute Krull dimensions of cohomology rings in particularly nice cases. The reader should notice these results rely on deep results from representation theory in the relative Category $\O$ (cf. \cite[\S 8]{MR2428237}).

\begin{theorem} \label{prop:E2-collapse}
  Let $\g = \even{\g} \oplus \odd{\g}$ be a classical Lie superalgebra such that $S^\bullet(\odd{\g})^{\even{G}}$ vanishes in odd degrees, and $\l \leq \even{\g}$ a standard Levi subalgebra (i.e., nonzero and generated by simple roots). The following hold.
  \begin{enumerate}[\indent\rm (a)]
    \item The spectral sequence of Section \ref{sec:spectral-sequence} collapses at the $E_2$ page and $E_2^{\bullet,\bullet}(\CC) \cong E_\infty^{\bullet,\bullet}(\CC)$.
    \item $\H^\bullet(\g,\l;\CC)$ is Cohen-Macualay,
    \item $\krdim \H^\bullet(\g,\even{\g};\CC) = \krdim \H^\bullet(\g,\l;\CC)$.
  \end{enumerate}
\end{theorem}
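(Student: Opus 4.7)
The plan is to reduce all three parts to a single parity analysis of the spectral sequence from Theorem \ref{thm:spectral-sequence}. Two vanishing facts are key. First, the hypothesis that $S^\bullet(\odd{\g})^{\even{G}}$ vanishes in odd degrees, combined with the identification $\H^p(\g,\even{\g};\CC) \cong S^p(\odd{\g}^*)^{\even{G}}$ from Section \ref{sec:cohomology} (together with the self-duality of invariants of symmetric powers under a reductive group action), forces $\H^p(\g,\even{\g};\CC) = 0$ for all odd $p$. Second, for a standard Levi subalgebra $\l \leq \even{\g}$, the relative cohomology $\H^q(\even{\g},\l;\CC)$ is concentrated in even degrees; this is the classical input alluded to by the reference to relative Category $\O$ (cf.\ \cite{MR2428237}), and can be seen by realizing $\H^\bullet(\even{\g},\l;\CC)$ in terms of the cohomology of the partial flag variety $\even{G}/P$ (where $P$ is the parabolic with Levi $L$), whose Schubert cell decomposition has only even-dimensional cells.

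For part (a), these two vanishing statements give $E_2^{p,q}(\CC) = 0$ unless both $p$ and $q$ are even, so every nonzero entry lies in total degree $p+q \equiv 0 \pmod{2}$. The differential $d_r$ has bidegree $(r,-r+1)$ and therefore shifts total degree by $+1$; any nontrivial $d_r$ for $r \geq 2$ would take an even-total-degree entry to an odd-total-degree entry, which must vanish. Hence every higher differential is zero and $E_2^{\bullet,\bullet}(\CC) \cong E_\infty^{\bullet,\bullet}(\CC)$.

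Part (b) is immediate from part (a) and Proposition \ref{prop:collapse-CM}. For part (c), the collapse identifies $\Gr \H^\bullet(\g,\l;\CC) \cong E_\infty^{\bullet,\bullet}(\CC) \cong \H^\bullet(\g,\even{\g};\CC) \otimes \H^\bullet(\even{\g},\l;\CC)$ as graded commutative algebras. Because $\even{\g}/\l$ is finite-dimensional and purely even, the factor $\H^\bullet(\even{\g},\l;\CC)$ is a finite-dimensional $\CC$-algebra, so tensoring with it does not change Krull dimension. Combining this with the fact that a Noetherian commutative filtered algebra and its associated graded have the same Krull dimension yields $\krdim \H^\bullet(\g,\l;\CC) = \krdim \H^\bullet(\g,\even{\g};\CC)$.

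The principal obstacle is the even-degree concentration of $\H^\bullet(\even{\g},\l;\CC)$ for standard Levi $\l$; once that representation-theoretic input is in hand, the remainder is a mechanical parity and dimension-count argument on the spectral sequence.
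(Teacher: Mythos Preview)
Your proof is correct and follows essentially the same strategy as the paper: use the two even-degree vanishing facts to force all higher differentials to be zero, then invoke Proposition \ref{prop:collapse-CM} for (b), and read off (c) from the finite-dimensionality of $\H^\bullet(\even{\g},\l;\CC)$. Your total-degree parity argument is a slightly slicker packaging than the paper's page-by-page induction on $d_r$, and your justification of the even-degree concentration of $\H^\bullet(\even{\g},\l;\CC)$ via the Schubert cell decomposition of $\even{G}/P$ is an alternative to the paper's appeal to the Kazhdan-Lusztig framework, but these are cosmetic differences rather than a different route.
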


\begin{proof}
  We establish (a). Parts (b) and (c) follow by application of Proposition \ref{prop:collapse-CM}.

  Let $\g = \even{\g} \oplus \odd{\g}$ be a classical Lie superalgebra such that $S^\bullet(\odd{\g}^*)^{\even{G}}$ is zero in odd degrees, and $\l \leq \even{\g}$ a Levi subalgebra. According to the Kazhdan-Lusztig conjectures\footnote{When $\h \leq \even{\g}$ is a Cartan subalgebra, $\Ext^n_\O(M,N) \cong \Ext^n_{(\even{\g},\h)}(M,N)$ (see \cite[Theorem 6.15]{MR2428237}). The fact that $\Ext^n_{(\even{\g},\h)}(\CC,\CC)$ vanishes in odd degrees follows from \cite{MR1245719}.}, $\H^\bullet(\even{\g},\l;\CC)$ is only nonzero in even degrees. Section \ref{sec:pages} realizes the $E_2$ page of the Hochschild-Serre spectral sequence as
\[
  E_2^{p,q}(\CC) \cong \H^q(\even{\g},\l;\CC) \otimes S^p(\odd{\g}^*)^{\even{G}}.
\]
Because the differential $d_2:E_2^{p,q} \to E_2^{p+2,q-1}$ descends one row, either $E_2^{p,q} = 0$ or $E_2^{p+2,q-1} = 0$. In either case, $d_2 = 0$ and thus $E_3^{p,q} = E_2^{p,q}$ meaning that $E_3^{p,q}$ vanishes unless $p$ and $q$ are both even. By a similar argument, the differential $d_3:E_3^{p,q} \to E_3^{p+3,q-2}$ must be zero since one of $E_3^{p,q}$ or $E_3^{p+3,q-2}$ will have odd horizontal coordinate and thus be zero. So $E_3^{p,q} \cong E_4^{p,q}$. By induction, this trend continues to arrive at the conclusion that $E_2^{p,q} \cong E_\infty^{p,q}$. This yields the following statement.
\end{proof}

This example restricts to the case that cohomology of $\g$ relative to $\even{\g}$ vanishes in odd degrees. While this may seem restrictive, \cite[Table 1]{BKN-1} reveals that there are a great many classical Lie superalgebras whose cohomology lives in even degree.

\begin{corollary}
  Let $\g = \even{\g} \oplus \odd{\g}$ be a Lie superalgebra of type $\gl(m|n)$, $\sl(m|n)$, $\mathfrak{psl}(2n|2n)$, $\mathfrak{osp}(2m+1|2n)$, $\mathfrak{osp}(2m|2n)$, $P(4\ell - 1)$, $D(2,1;\alpha)$, $G(3)$, or $F(4)$. Let $\l \leq \even{\g}$ be a standard Levi subalgebra. The following hold:
  \begin{enumerate}[\indent\rm(a)]
  \item $\H^\bullet(\g,\l;\CC)$ is a Cohen-Macaulay ring.
  \item $\krdim \H^\bullet(\g,\l;\CC) = \krdim S^\bullet(\odd{\g}^*)$.
  \end{enumerate}
\end{corollary}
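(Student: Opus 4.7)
The plan is to reduce the corollary to Theorem~\ref{prop:E2-collapse}, whose sole hypothesis is that the invariant ring $S^\bullet(\odd{\g}^*)^{\even{G}}$ vanishes in odd degrees. All remaining work amounts to verifying this hypothesis for each family on the list; no new spectral-sequence or cohomological input is required.

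First I would consult \cite[Table 1]{BKN-1}, which records $S^\bullet(\odd{\g}^*)^{\even{G}}$ explicitly for each classical Lie superalgebra. For $\gl(m|n)$, $\sl(m|n)$, $\mathfrak{psl}(2n|2n)$, and for the orthosymplectic families $\mathfrak{osp}(2m+1|2n)$ and $\mathfrak{osp}(2m|2n)$, the natural generators of the invariant ring are quadratic in the coordinates on $\odd{\g}^*$ (traces of pairs, determinants, and similar bilinear constructions), so the ring is manifestly concentrated in even degree. For $P(4\ell-1)$, $D(2,1;\alpha)$, $G(3)$, and $F(4)$, the same table exhibits an explicit finite set of even-degree generators, which I would confirm case by case; the commentary immediately preceding the corollary signals that this verification is precisely what is intended.

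With the hypothesis of Theorem~\ref{prop:E2-collapse} in hand, the corollary follows mechanically. Part~(a) is exactly part~(b) of that theorem. For part~(b), part~(c) of the theorem gives
\[
  \krdim \H^\bullet(\g,\l;\CC) = \krdim \H^\bullet(\g,\even{\g};\CC),
\]
and the identification $\H^\bullet(\g,\even{\g};\CC) \cong S^\bullet(\odd{\g}^*)^{\even{G}}$ of \cite[Theorem~2.5.2]{BKN-1} then yields the stated Krull dimension equality (interpreting the right-hand side of the corollary as the invariant subring, consistent with the usage throughout the paper).

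The main obstacle, modest as it is, is the case-by-case bookkeeping against \cite[Table 1]{BKN-1}; there is no conceptual difficulty beyond accurately reading off generating degrees for the exceptional and $P$-type algebras. Once that table is accepted, the corollary is a direct corollary of Theorem~\ref{prop:E2-collapse} in the precise sense its name suggests.
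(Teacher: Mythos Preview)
Your proposal is correct and matches the paper's intended argument exactly: the paper provides no separate proof, instead noting just before the corollary that \cite[Table~1]{BKN-1} lists the classical Lie superalgebras whose invariant ring $S^\bullet(\odd{\g}^*)^{\even{G}}$ is concentrated in even degree, so the corollary is meant to follow immediately from Theorem~\ref{prop:E2-collapse}. Your observation that the right-hand side of part~(b) should be read as the invariant subring $S^\bullet(\odd{\g}^*)^{\even{G}}$ is also well taken.
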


\section{Support Varieties}
\label{sec:support-varieties}

\subsection{}Let $\g$ be a classical Lie superalgebra with $\a \leq \even{\g}$ a subalgebra. We showed in Theorem \ref{thm:spectral-sequence} that $\H^\bullet(\g,\a;\CC)$ is a finitely-generated graded-commutative $\CC$-algebra. Therefore, the subring of even cohomology classes $\H^{ev}(\g,\a;\CC) = \bigoplus \H^{2\bullet}(\g,\a;\CC)$ is commutative and finitely-generated over $\CC$. The \textit{cohomology variety} of $\g$ relative to $\a$ is the algebraic variety
\[
  \V_{(\g,\a)}(\CC) = \maxspec\left(\H^{ev}(\g,\a;\CC)\right).
\]
Note that since $\H^{ev}(\g,\a;\CC)$ is graded we just as well could have looked at the projectivization of $\V_{(\g,\a)}(\CC)$. When dealing with questions of connectivity it will be advantageous to use the projectivization, but in other contexts we will focus exclusively on the conical affine variety.

Recall that $\Ext_{(\g,\a)}^\bullet(M,M)$ is a module over $\H^\bullet(\g,\a;\CC)$, so its annihilator defines a subvariety called the \textit{support variety} of $M$, denoted
\[
  \V_{(\g,\a)}(M) = \Z\left({\Ann_{\H^{ev}(\g,\a;\CC)} \Ext_{(\g,\a)}^\bullet(M,M)}\right) \subseteq \V_{(\g,\a)}(\CC)
\]
where $\Z(I)$ denotes the vanishing set of $I$.

An alternative definition of the support variety is
\[
  \V_{(\g,\a)}(M) = \left\{~ \m \in \V_{(\g,\a)}(\CC) \mid \Ext_{(\g,\a)}^\bullet(M,M)_{\m} \neq 0 ~\right\}.
\]
The following are basic properties whose proof is standard and may be found in \cite{FP-unipotent}.

\begin{enumerate}
  \item For any $\g$-module $M$, $\V_{(\g,\a)}(M)$ is a closed, conical subvariety of $\V_{(\g,\a)}(\CC)$.
  \item For any $\g$-modules $M_1$ and $M_2$, $\V_{(\g,\a)}(M_1 \oplus M_2) = \V_{(\g,\a)}(M_1) \cup \V_{(\g,\a)}(M_2)$.
  \item Whenever $0 \to M_1 \to M_2 \to M_3 \to 0$ is a short exact sequence of $\g$-modules, and $\sigma \in \mathfrak{S}_3$ is a permutation of three letters, $\V_{(\g,\a)}(M_{\sigma(1)}) \subseteq \V_{(\g,\a)}(M_{\sigma(2)}) \cup \V_{(\g,\a)}(M_{\sigma(3)})$.
\end{enumerate}

In this section, we use the realization map $\Phi:\V_{(\g,\a)}(\CC) \to \V_{(\g,\even{\g})}(\CC)$ induced by restriction $\resmap: \H^\bullet(\g,\even{\g};\CC) \to \H^\bullet(\g,\a;\CC)$ to determine properties of (the image of) $\V_{(\g,\a)}(M)$. This has the advantage of taking the elusive, abstract support variety and embedding it inside of something concrete -- indeed, $\V_{(\g,\even{\g})}(\CC)$ is simply the set of closed orbits of the action $\even{G}$ on $\odd{\g}$.

\subsection{Realizability}
\label{sec:realizability}

In this section we address the question of realizability, initially studied by Carlson \cite{MR723070}. As we are using results of Bagci-Kujawa-Nakano \cite{MR2448087}, we need additional assumptions on the Lie superalgebra $\g$, namely we require the superalgebra is \emph{stable} and \emph{polar} in addition to being classical. These assumptions originate in geometric invariant theory, and hold for $\gl(m|n)$ -- see \cite[\S 3.2-3.3]{BKN-1} for a thorough description.

\begin{definition}
  Let $\g = \even{\g} \oplus \odd{\g}$ be a classical, stable, and polar Lie superalgebra with $\a \leq \even{\g}$ a subalgebra. We say $(\g,\even{\g})$-module $M$ is \emph{natural} (with respect to $\a$) if $\V_{(\g,\even{\g})}(M) \cap \Phi\left(\V_{(\g,\a)}(\CC)\right) = \Phi\left(\V_{(\g,\a)}(M)\right)$. The subalgebra $\a$ is \emph{natural} if every $\g$-module is natural with respect to $\a$.
\end{definition}

The paper of Bagci-Kujawa-Nakano \cite[Theorem 8.8.1]{MR2448087} demonstrated that every closed conical subvariety of $\V_{(\g,\even{\g})}(\CC)$ is realized as the support variety of a $(\g,\even{\g})$-module.

\begin{proposition}
  Let $\g = \even{\g} \oplus \odd{\g}$ be a classical, stable, and polar Lie superalgebra with $\a \leq \even{\g}$ a natural subalgebra. Let $X \subseteq \V_{(\g,\a)}(\CC)$ be a closed, conical subvariety. There exists a $(\g,\a)$-module $M$ such that $\Phi\left(\V_{(\g,\a)}(M)\right) = \Phi(X)$.
\end{proposition}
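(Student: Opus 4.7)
The plan is to reduce to the Bagci-Kujawa-Nakano realizability theorem \cite[Theorem 8.8.1]{MR2448087} applied to the image $\Phi(X) \subseteq \V_{(\g,\even{\g})}(\CC)$, and then transport the conclusion back through the naturality hypothesis on $\a$.

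First I would verify that $\Phi(X)$ is a closed conical subvariety of $\V_{(\g,\even{\g})}(\CC)$. By the edge-homomorphism analysis of Section \ref{sec:spectral-sequence}, the restriction map factors as $\H^\bullet(\g,\even{\g};\CC) \twoheadrightarrow \im(\resmap) \hookrightarrow \H^\bullet(\g,\a;\CC)$, where the second inclusion is an integral extension since $\H^\bullet(\g,\a;\CC)$ is a finitely-generated module over $S(\odd{\g}^*)^{\even{G}}$ via this map. Dualizing, $\Phi$ becomes the composition of a finite surjection $\V_{(\g,\a)}(\CC) \twoheadrightarrow \maxspec(\im\,\resmap)$ with a closed embedding $\maxspec(\im\,\resmap) \hookrightarrow \V_{(\g,\even{\g})}(\CC)$; both factors are closed maps, so $\Phi$ is closed. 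Because $\resmap$ is a graded homomorphism, $\Phi$ also preserves conicality, so $\Phi(X)$ is a closed conical subvariety of $\V_{(\g,\even{\g})}(\CC)$.

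Next I would invoke the realizability theorem of Bagci-Kujawa-Nakano to produce a $\g$-module $M$ with $\V_{(\g,\even{\g})}(M) = \Phi(X)$. Since $X \subseteq \V_{(\g,\a)}(\CC)$ implies $\Phi(X) \subseteq \Phi(\V_{(\g,\a)}(\CC))$, the naturality of $\a$ gives
\[
  \Phi\bigl(\V_{(\g,\a)}(M)\bigr) = \V_{(\g,\even{\g})}(M) \cap \Phi\bigl(\V_{(\g,\a)}(\CC)\bigr) = \Phi(X) \cap \Phi\bigl(\V_{(\g,\a)}(\CC)\bigr) = \Phi(X),
\]
which is the desired conclusion.

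The main obstacle I anticipate is establishing the closedness of $\Phi$: a morphism of affine varieties is in general only constructible on images, so the integral/finite nature of the edge homomorphism is essential to guarantee that $\Phi(X)$ really is a closed conical subvariety—the precise input required by BKN realizability. Once that is in hand, the argument is a straightforward diagram chase combining the realizability theorem with the definition of \emph{natural}.
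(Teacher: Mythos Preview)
Your argument is correct and follows essentially the same route as the paper: apply Bagci--Kujawa--Nakano realizability to $\Phi(X)$ and then use naturality of $\a$ to pull the conclusion back. You are in fact more careful than the paper, which silently assumes that $\Phi(X)$ is closed and conical; your justification via the finiteness of the edge homomorphism fills that gap.
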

\begin{proof}
  The realization theorem holds for $(\g,\even{\g})$-modules, so choose $M$ such that $\V_{(\g,\even{\g})}(M) = \Phi(X)$. By naturality, $\Phi(\V_{(\g,\a)}(M)) = \Phi(\V_{(\g,\a)}(\CC)) \cap \V_{(\g,\even{\g})}(M) = \Phi(X)$.
\end{proof}

\subsection{Tensor products}
\label{sec:tensor-products}

A tensor product theorem gives us the ability to geometrically control the support theory of tensor products of modules. Historically, this has been a very elusive property of support varieties, often times requiring support varieties recognized in some other way. For example, in the case of finite groups, the tensor product theorem was not shown until support varieties were determined to be isomorphic to the very concrete rank varieties \cite{MR621284}.

In this section, we circumvent this issue by considering only superalgebras which satisfy the tensor product theorem relative to $(\g,\even{\g})$, and using the realization map to intersect supports of $(\g,\a)$-modules inside $\V_{(\g,\even{\g})}(\CC)$.

\begin{definition}
  Let $\g = \even{\g} \oplus \odd{\g}$ be a Lie superalgebra with subalgebra $\a \leq \even{\g}$. The pair $(\g,\a)$ is said to satisfy the \emph{tensor product theorem} if $\V_{(\g,\a)}(M \otimes N) = \V_{(\g,\a)}(M) \cap \V_{(\g,\a)}(N)$ for all modules $M,N$.
\end{definition}

Lehrer-Nakano-Zhang proved the tensor product theorem hold for the pair $(\gl(m|n),\even{\gl(m|n)})$, \cite[Theorem 5.2.1]{MR2836115}

\begin{proposition}
  Let $\g = \even{\g} \oplus \odd{\g}$ be a Lie superalgebra which satisfies the tensor product theorem relative to $\even{\g}$, and $\a \leq \even{\g}$ a natural subalgebra of $\g$.  Then $\Phi(\V_{(\g,\a)}(M \otimes N)) = \Phi(\V_{(\g,\a)}(M)) \cap \Phi(\V_{(\g,\a)}(N))$.
\end{proposition}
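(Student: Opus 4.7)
The plan is to reduce the tensor-product equality for $(\g,\a)$-support (after applying $\Phi$) to the tensor-product theorem for $(\g,\even{\g})$-support by intersecting everything with the distinguished subvariety $\Phi(\V_{(\g,\a)}(\CC)) \subseteq \V_{(\g,\even{\g})}(\CC)$. The only conceptual inputs are the hypothesis that $\a$ is natural and the hypothesis that $(\g,\even{\g})$ satisfies the tensor product theorem; the rest is a short set-theoretic chase.

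First, I would apply the definition of naturality to each of the three modules $M$, $N$, and $M \otimes N$ in turn. This gives the three identities
\[
  \Phi(\V_{(\g,\a)}(X)) = \V_{(\g,\even{\g})}(X) \cap \Phi(\V_{(\g,\a)}(\CC)) \qquad \text{for } X \in \{M, N, M \otimes N\}.
\]
Rewriting the right-hand side of the desired equation using the cases $X = M$ and $X = N$, and pulling the common factor $\Phi(\V_{(\g,\a)}(\CC))$ out of the intersection, produces
\[
  \Phi(\V_{(\g,\a)}(M)) \cap \Phi(\V_{(\g,\a)}(N)) = \V_{(\g,\even{\g})}(M) \cap \V_{(\g,\even{\g})}(N) \cap \Phi(\V_{(\g,\a)}(\CC)).
\]

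Next, I would invoke the tensor product theorem for $(\g,\even{\g})$, i.e.\ $\V_{(\g,\even{\g})}(M \otimes N) = \V_{(\g,\even{\g})}(M) \cap \V_{(\g,\even{\g})}(N)$, to substitute for the first two factors on the right. Finally, applying naturality once more (with $X = M \otimes N$) converts this intersection back into $\Phi(\V_{(\g,\a)}(M \otimes N))$, closing the loop. Since every step is an equality, both inclusions are obtained simultaneously.

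There is no real obstacle here: the statement is essentially a formal consequence of the two blackbox hypotheses combined with elementary properties of intersections. The mildest point of care is ensuring that naturality is applied to $M \otimes N$ as well — this is licit because $\a$ is assumed natural, which by definition means \emph{every} $\g$-module (not just $M$ and $N$) is natural with respect to $\a$.
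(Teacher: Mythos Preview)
Your proposal is correct and follows essentially the same route as the paper: apply naturality to $M$, $N$, and $M\otimes N$, and invoke the tensor product theorem for $(\g,\even{\g})$ to identify $\V_{(\g,\even{\g})}(M)\cap\V_{(\g,\even{\g})}(N)$ with $\V_{(\g,\even{\g})}(M\otimes N)$. The paper merely runs the chain of equalities starting from $\Phi(\V_{(\g,\a)}(M\otimes N))$ rather than from the right-hand side, but the logical content is identical.
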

\begin{proof}
  One has:
  \begin{align*}
    \Phi(\V_{(\g,\a)}(M \otimes N)) &= \Phi(\V_{(\g,\a)}(\CC)) \cap \V_{(\g,\even{\g})}(M \otimes N) \\
                                   &= \left(\Phi(\V_{(\g,\a)}(\CC)) \cap \V_{(\g,\even{\g})}(M)\right) \cap \left( \Phi(\V_{(\g,\a)}(\CC)) \cap \V_{(\g,\even{\g})}(N)\right) \\
    &= \Phi(\V_{(\g,\a)}(M)) \cap \Phi(\V_{(\g,\a)}(N)).
  \end{align*}
\end{proof}

\subsection{Connectedness of support varieties}
\label{sec:connectivity}

This subsection investigates connectedness of support varieties, motivated by Benson's presentation \cite{MR1634407}.

\begin{proposition}
  Let $\g = \even{\g} \oplus \odd{\g}$ be a classical, stable, and polar Lie superalgebra with $\a \leq \even{\g}$ a natural subalgebra. Suppose $\Phi(\V_{(\g,\a)}(M)) = X \cup Y$ with $X \cap Y = \{0\}$. Then there exist modules $M_1$ and $M_2$ such that $M = M_1 \oplus M_2$, $X = \Phi(\V_{(\g,\a)}(M_1))$, $Y = \Phi(\V_{(\g,\a)}(M_2))$, and \[
    \Phi(\V_{(\g,\a)}(M)) = \Phi(\V_{(\g,\a)}(M_1)) \cup \Phi(\V_{(\g,\a)}(M_2)).
  \]
\end{proposition}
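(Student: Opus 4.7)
The plan is to leverage the naturality hypothesis on $\a$ to reduce the problem to the analogous connectedness statement for $(\g,\even{\g})$-module supports, which can be established by adapting Carlson's original argument in the setting of the realization theorem of Bagci-Kujawa-Nakano \cite{MR2448087}. First, naturality of $\a$ rewrites the hypothesis as
\[
  X \cup Y \;=\; \Phi(\V_{(\g,\a)}(M)) \;=\; \V_{(\g,\even{\g})}(M) \cap \Phi(\V_{(\g,\a)}(\CC)),
\]
so the disjoint decomposition of $\Phi(\V_{(\g,\a)}(M))$ is cut out of the ambient $(\g,\even{\g})$-support of $M$ by $\Phi(\V_{(\g,\a)}(\CC))$.

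The central technical step is then to lift this to a disjoint conical decomposition
\[
  \V_{(\g,\even{\g})}(M) = \tilde X \cup \tilde Y, \qquad \tilde X \cap \tilde Y = \{0\},
\]
satisfying $\tilde X \cap \Phi(\V_{(\g,\a)}(\CC)) = X$ and $\tilde Y \cap \Phi(\V_{(\g,\a)}(\CC)) = Y$. Since $X$ and $Y$ are closed conical subvarieties of $\V_{(\g,\even{\g})}(\CC)$ meeting only at the origin, I would choose homogeneous classes $\zeta_X,\zeta_Y \in \H^{ev}(\g,\even{\g};\CC)$ of positive degree with $\zeta_X$ vanishing on $X$ but not on any non-zero point of $Y$, and $\zeta_Y$ with the roles reversed. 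Decomposing $\V_{(\g,\even{\g})}(M)$ into irreducible components and assigning each component to $\tilde X$ or $\tilde Y$ according to which separating class vanishes on it should yield the required bipartition.

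Having produced $\tilde X,\tilde Y$, I would apply the $(\g,\even{\g})$-connectedness theorem (proved by a tensor-product and realization argument analogous to Carlson's for finite group cohomology, making use of the realization result in \cite{MR2448087}) to obtain a $\g$-module decomposition $M \cong M_1 \oplus M_2$ with $\V_{(\g,\even{\g})}(M_1) = \tilde X$ and $\V_{(\g,\even{\g})}(M_2) = \tilde Y$. A final application of naturality then gives
\[
  \Phi(\V_{(\g,\a)}(M_1)) = \V_{(\g,\even{\g})}(M_1) \cap \Phi(\V_{(\g,\a)}(\CC)) = \tilde X \cap \Phi(\V_{(\g,\a)}(\CC)) = X,
\]
and similarly $\Phi(\V_{(\g,\a)}(M_2)) = Y$, completing the proof.

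The main obstacle is the ambient-decomposition step. Irreducible components of $\V_{(\g,\even{\g})}(M)$ that meet $\Phi(\V_{(\g,\a)}(\CC))$ only at the origin, or that straddle both $X$ and $Y$ in a delicate way, could disturb the clean bipartition; verifying that the separating cohomology classes $\zeta_X,\zeta_Y$ can always be chosen to induce a consistent partition of the irreducible components of $\V_{(\g,\even{\g})}(M)$ is the technical heart of the argument. If $\V_{(\g,\even{\g})}(M) \subseteq \Phi(\V_{(\g,\a)}(\CC))$ the obstruction evaporates, so any residual difficulty lives entirely in the \emph{excess} support outside the image of $\Phi$.
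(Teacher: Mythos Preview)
Your route and the paper's diverge at the key step. The paper does \emph{not} attempt to lift the decomposition $X\cup Y$ to a disjoint decomposition of $\V_{(\g,\even{\g})}(M)$ and then invoke a $(\g,\even{\g})$-connectedness theorem. Instead it applies the Bagci--Kujawa--Nakano realizability result directly to the pieces $X$ and $Y$ themselves: since each is a closed conical subvariety of $\V_{(\g,\even{\g})}(\CC)$, one chooses modules $M_1,M_2$ with $\V_{(\g,\even{\g})}(M_1)=X$ and $\V_{(\g,\even{\g})}(M_2)=Y$, and then naturality immediately gives $\Phi(\V_{(\g,\a)}(M_i))=\V_{(\g,\even{\g})}(M_i)\cap\Phi(\V_{(\g,\a)}(\CC))=X$ (resp.\ $Y$), since $X,Y\subseteq\Phi(\V_{(\g,\a)}(\CC))$ already. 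This sidesteps entirely the ambient-decomposition obstacle you worried about.

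However, note what the paper's argument actually establishes: it produces \emph{some} modules $M_1,M_2$ with the prescribed $\Phi$-images, but it never shows that $M\cong M_1\oplus M_2$ --- the $M_i$ come from realizability, not from a splitting of $M$. So the displayed union and the identifications $X=\Phi(\V_{(\g,\a)}(M_1))$, $Y=\Phi(\V_{(\g,\a)}(M_2))$ are obtained, but the direct-sum clause in the statement is not addressed by the paper's proof. Your strategy is precisely what one would need to recover that clause, and the obstruction you isolate --- irreducible components of $\V_{(\g,\even{\g})}(M)$ lying over the origin of $\Phi(\V_{(\g,\a)}(\CC))$ or straddling both $X$ and $Y$ --- is a genuine one that neither argument resolves. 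In short: the paper's approach is shorter and avoids your lifting problem, but at the cost of not delivering the decomposition $M=M_1\oplus M_2$; your approach targets the full statement but runs into exactly the difficulty you flagged.
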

\begin{proof}
  By realizability for $(\g,\even{\g})$, because $\Phi(\V_{(\g,\a)}(M))$ is a closed conical subvariety of $\V_{(\g,\even{\g})}(M)$, there exist $M_1$ and $M_2$ such that $\Phi(\V_{(\g,\a)}(M)) = \V_{(\g,\even{\g})}(M_1) \cup \V_{(\g,\a)}(M_2)$. Using this fact, we may compute:
  \begin{align*}
    \Phi(\V_{(\g,\a)}(M)) &= \V_{(\g,\even{\g})}(M_1) \cup \V_{(\g,\even{\g})}(M_2) \\
                         &= \left(\Phi(\V_{(\g,\a)}(\CC)) \cap \V_{(\g,\even{\g})}(M_1)\right) \cup \left(\Phi(\V_{(\g,\a)}(\CC)) \cap \V_{(\g,\even{\g})}(M_2)\right) \\
    &= \Phi(\V_{(\g,\a)}(M_1)) \cup \Phi(\V_{(\g,\a)}(M_2)).
  \end{align*}
\end{proof}


\section{Open questions}
\label{sec:open-questions}

In this section we present problems motivated by our study of support varieties for Lie superalgebras.

\begin{enumerate}
\item For which subsuperalgebras $\a \leq \g$ is the relative cohomology ring $\H^\bullet(\g,\a;\CC)$ a finitely-generated $\CC$-algebra?
\item   The converse to Proposition \ref{prop:collapse-CM} holds. Namely, if $\g = \even{\g} \oplus \odd{\g}$ is a classical Lie superalgebra, with $\a \leq \even{\g}$. If the cohomology ring $\H^\bullet(\g,\a;\CC)$ is Cohen-Macaulay, is it true that the spectral sequence of Section \ref{sec:spectral-sequence} must collapse at the $E_2$ page?
\item Boe, Kujawa, and Nakano realized that when $\g = \gl(m|n)$ and $M$ is a simple $\g$-module, $\dim \V_{(\g,\even{\g})}(M)$ is equal to the (combinatorial) \emph{atypicality} of the module, $\atyp (M)$. Is there any combinatorial interpretation for the dimension $\dim \V_{(\g,\a)}(M)$ for more general $(\g,\a)$ with $M$ a simple $\g$-module?
\item Under what conditions is the map $\Phi: \V_{(\g,\a)}(\CC) \to \V_{(\g,\even{\g})}(\CC)$ induced by restriction a closed embedding? When is this map natural, in the sense that $\Phi(\V_{(\g,\a)}(M)) = \V_{(\g,\even{\g})}(M) \cap \Phi(\V_{(\g,\a)}(\CC))$?
Can we classify all $M$ such that $\Phi(\V_{(\g,\a)}(M)) = \V_{(\g,\even{\g})}(M) \cap \Phi(\V_{(\g,\a)}(\CC))$?
\end{enumerate}

\bibliographystyle{plain}{}
\bibliography{references}

\end{document}